\begin{document}

\title{An Arnold-type principle for non-smooth objects}
\date{\today}
\author{Lev Buhovsky, Vincent Humili\`ere, Sobhan Seyfaddini}
\maketitle

\begin{center}
   \emph{Dedicated to Claude Viterbo}

      \emph{on the occasion of his 60th birthday.}
\end{center}

\bigskip
\begin{abstract} 
In this article we study the Arnold conjecture in settings where objects under consideration are no longer smooth but only continuous.  The example of a Hamiltonian homeomorphism, on any closed symplectic manifold of dimension greater than 2, having only one fixed point shows that the conjecture does not admit a direct generalization to continuous settings. However, it appears that the following  Arnold-type principle continues to hold in $C^0$ settings:  Suppose that $X$ is a non-smooth object for which one can define spectral invariants. If the number of spectral invariants associated to $X$ is smaller than the number predicted by the (homological) Arnold conjecture, then the set of fixed/intersection points of $X$ is homologically non-trivial, hence it is infinite. 

We recently proved that the above principle holds for Hamiltonian homeomorphisms of closed and aspherical symplectic manifolds.  In this article, we verify this principle in two new settings:  $C^0$ Lagrangians in cotangent bundles and Hausdorff limits of Legendrians in $1$-jet bundles which are isotopic to 0-section.

An unexpected consequence of the result on Legendrians is that the classical Arnold conjecture does hold for  Hausdorff limits of Legendrians in $1$-jet bundles.
\end{abstract}

\tableofcontents


\section{Introduction and main results}

The Arnold conjecture states that a Hamiltonian \emph{diffeomorphism} of a closed and connected symplectic manifold $(M, \omega)$ must have at least as many fixed points as the minimal number of critical points of a smooth function on $M$. The classical Lusternik-Schnirelmann theory shows that this minimal number is always at least the \emph{cup length} of $M$, a topological invariant of $M$ defined as\footnote{Here, $\cap$ refers to the intersection product in homology. The cup length can be equivalently defined in terms of the cup product in cohomology.}
\begin{align*}\cl(M):= \max \{k+1\,:\, \exists \, a_1&, \ldots, a_k \in H_*(M),\,\,
 \forall i, \deg(a_i)\neq  \mathrm{dim}(M)\\ &\text{ and }  a_1 \cap \cdots \cap a_k \neq 0\}.
\end{align*}

Therefore, a natural interpretation of the Arnold conjecture, sometimes referred to as the homological Arnold conjecture, is that a Hamiltonian diffeomorphism of $(M, \omega)$ must have at least $\cl(M)$ fixed points.\footnote{Note that we do not make any assumptions regarding non-degeneracy of Hamiltonian diffeomorphisms here.}  Successful efforts at resolving this conjecture were pioneered by Floer \cite{floer86, floer88, floer89} and led to the development of what is now called Floer homology.  The original version of the Arnold conjecture has been proven on symplectically aspherical manifolds \cite{rudyak-oprea}, \cite{floer89b}, \cite{hofer} while the homological version has been proven on a larger class of manifolds, {\it e.g.}\ $\mathbb{C}P^n$ by Fortune-Weinstein \cite{FW}, and symplectic manifolds which are negatively monotone by L\^e-Ono \cite{Le-Ono}.

The Arnold conjecture admits reformulations for symplectic objects other than Hamiltonian diffeomorphisms:  For example, a Lagrangian version of the conjecture states that in a cotangent bundle $T^*N$, a Lagrangian submanifold which is Hamiltonian isotopic to the zero section must have at least $\cl(N)$ intersection points with the zero section $O_N$ (See \cite{hofer,laudenbach-sikorav}). Here is a Legendrian reformulation of this last statement: a Legendrian submanifold in a 1-jet bundle $J^1N=T^*N\times \R$,  which is isotopic to the zero section through Legendrians, must have at least $\cl(N)$ intersections with the 0-wall $O_N\times\R$.\footnote{Sandon has recently presented a reformulation of the Arnold conjecture for contactomorphisms; see \cite{sandon12, sandon13}.}

The goal of this article is to understand the Arnold conjecture in settings where objects under consideration are no longer smooth but only continuous.  Although the Arnold conjecture is true for Hamiltonian homeomorphisms of surfaces \cite{matsumoto}, we showed in \cite{BHS} that every closed and connected symplectic manifold of dimension at least 4 admits a Hamiltonian homeomorphism with a single fixed point.  Analoguously, an example of a continuous Lagrangian submanifold Hamiltonian homeomorphic to the zero section and having a single intersection point with the zero section can be constructed in the cotangent bundle of any closed connected surface, see Proposition \ref{prop:single-intersection} below.

In spite of these counter-examples, 
it appears that certain reformulations of the Arnold conjecture do survive in $C^0$ settings.  These reformulations, which involve counting fixed/intersection points and certain ``homologically essential'' critical values of the action (\emph{i.e.\ spectral invariants}), are  inspired by the following statement from Lusternik--Shnirelman theory:
 
 \emph{
 Let $f$ be a smooth function on a closed manifold $M$.  If the number of homologically essential critical values of $f$ is smaller than $\cl(M)$, then the set of critical points of $f$ is homologically non-trivial. 
 }
 
  The above statement can be deduced from Proposition \ref{prop:cLS}. Homologically essential critical values, which are usually referred to as {\em spectral invariants} in the symplectic literature, are defined in Section \ref{sec:LS_theory}.   A subset $A\subset M$ is homologically non-trivial if for every open neighborhood $U$ of $A$ the map $i_*: H_j(U) \rightarrow H_j(M)$, induced by the inclusion $i: U \hookrightarrow M$, is non-trivial for some $j>0$. Clearly, homologically non-trivial sets are infinite.

 The reformulations of the Arnold conjecture which continue to hold in $C^0$ settings may be summarized as follows:

\begin{principle}\label{principle}
Suppose that $X$ is a non-smooth object for which one can define spectral invariants. If the number of spectral invariants associated to $X$ is smaller than the number predicted by the homological Arnold conjecture, then the set of fixed/intersection points of $X$ is homologically non-trivial, hence it is infinite. 
\end{principle}

In our recent article \cite{BHS2}, we established the above principle for Hamiltonian homeomorphisms of symplectically aspherical manifolds: Suppose that $(M, \omega)$ is closed, connected, and symplectically aspherical.  In Theorem 1.4 of \cite{BHS2} we prove that if $\phi$ is a Hamiltonian homeomorphism of $(M, \omega)$ with fewer spectral invariants than $\mathrm{cl}(M)$, then the set of fixed points of $\phi$ is homologically non-trivial. A variant of this statement for negative monotone symplectic manifolds and for complex projective spaces has been proven by Y. Kawamoto in \cite{kawamoto}.

The main results of this article establish Principle \ref{principle} in two more contexts: $C^0$ Lagrangians in cotangent bundles and Hausdorff limits of Legendrians in $1$-jet bundles. 

\medskip
\noindent \textbf{$C^0$ Lagrangians:}  Consider the cotangent bundle $T^*N$ of a closed manifold $N$ and denote by $O_N$ its zero section.  As we will see in Section \ref{sec:lagrangians},  (Lagrangian) spectral invariants can be defined for a $C^0$ Lagrangian of the form $L = \phi(O_N)$ where $\phi$ is a compactly supported Hamiltonian homeomorphism of $T^*N$; this is proven in Theorem \ref{prop:lag-spec-extension}.  We call such a $ C^0 $ Lagrangian ``a $C^0$ Lagrangian Hamiltonian homeomorphic to the zero section''. It is not difficult to see that in this setting our principle translates to the following statement.

\begin{theo} \label{theo:Arnold_Lagrangians}
Let $\phi$ denote a compactly supported Hamiltonian homeomorphism of $T^*N$ and suppose that $L = \phi(O_N)$.  If the number of spectral invariants of $L$ is smaller than $\cl(N)$, then $L \cap O_N$ is homologically non-trivial, hence it is infinite. 
\end{theo}

It is interesting to remark that, as for Hamitonian homeomorphisms, the Arnold conjecture breaks down for $C^0$ Lagrangians; this is the content of the next result.

  \begin{prop}\label{prop:single-intersection}
      Let $M$ be a closed connected surface. Then, there is a Hamitonian homeomorphism $\psi$ of $T^*M$ such that the $C^0$-Lagrangian $L=\psi(O_M)$ has only one intersection with the zero-section $O_M$.
  \end{prop}

Note that although we expect a similar statement to hold
 in higher dimensions, our proof is valid only for $M$ of dimension two. However, the argument we present is relatively simple compared to the construction in \cite{BHS}.

\begin{remark}  
  Of course, as a consequence of Theorem \ref{theo:Arnold_Lagrangians}, a $C^0$ submanifold $L$ as in Proposition \ref{prop:single-intersection} must have at least $\mathrm{cl}(N)$ distinct spectral invariants.
\end{remark}

\begin{remark}  It is reasonable to ask if in the above theorem the hypothesis $L = \phi(O_N)$ could be weakened to $L$ being the Hausdorff limit of a sequence $L_i$, where each $L_i$ is Hamiltonian isotopic to the zero section. This is related to a conjecture of Viterbo; see also Remark \ref{rem:Viterbo_conj} below.
\end{remark}
\medskip 

\noindent \textbf{Hausdorff limits of Legendrians:} As we will show in Section \ref{sec:legendrians}, one can associate spectral invariants to the Hausdorff limit of a sequence of Legendrians which are contact isotopic to the zero section in a $1$-jet bundle $J^1N$.  The interpretation of our principle in this case turns out to be particularly interesting for the following reason: Consider an intersection point $(q,0,z)$ between such a Legendrian $L$ and the 0-wall. This point corresponds to a critical point of the action and the associated critical value is $z$. In other words, the critical value can be read directly from the intersection point. It follows that in this context Principle \ref{principle} implies the Arnold conjecture itself!

As explained above, for a smooth Legendrian $L$ the action spectrum is given by $\spec(L)= \pi_{\mathbb{R}}(L \cap (O_N \times \mathbb{R}))$, where  $\pi_\mathbb{R} : J^1N = T^* N \times \mathbb{R} \rightarrow \mathbb{R} $ is the natural projection. By analogy, we will define the \emph{spectrum} of any subset $L\subset J^1N$ to be
\[\spec(L)= \pi_{\mathbb{R}}(L \cap (O_N \times \mathbb{R})).\]

\begin{theo} \label{theo:Arnold_Legendrians}
Let $L_i$ be a sequence of Legendrian submanifolds in $J^1N$ which are contact isotopic to the zero section $O_N \times\{0\} $. Suppose that this sequence has a limit $L$ for the Hausdorff distance, where $ L \subset J^1N $ is a compact subset.

 Assume that the cardinality $\spec(L)$ is strictly less than $ \cl(N) $. Then, there exists $\lambda\in\spec(L)$ such that $L\cap (O_N\times\{\lambda\})$ is homologically non-trivial in $O_N\times\{\lambda\}$. In particular, $L \cap (O_N \times \mathbb{R})$ is infinite. 
\end{theo}

Note that we make no assumptions with regards to regularity of $L$.  In fact, we do not even require $L$ to be a $C^0$ submanifold of $J^1N$.

\begin{remark}
  A careful examination of the proof of Theorem \ref{theo:Arnold_Legendrians} reveals that the assumption of Hausdorff convergence of $L_i$ to $L$ can be relaxed to the following: any neighborhood of $L$ contains $L_i$ for $i$ large.
\end{remark}

\begin{remark} In an ongoing project  \cite{Humiliere-Vichery}, the second author and N. Vichery show that Principle \ref{principle} can also be established for singular supports of sheaves (belonging to a certain subcategory of sheaves introduced by Tamarkin). These singular supports can be seen as (singular) generalizations of Legendrian submanifolds. 
\end{remark}

\subsection*{Organization of the paper}  
In Section \ref{sec:hamiltonian_homeos}, we recall some basic notions from symplectic geometry.   In Section \ref{sec:prelim_spec}, we introduce preliminaries on Lusternik-Schnirelmann theory and spectral invariants. 

Section \ref{sec:lagrangians} is dedicated to establishing Principle \ref{principle} for  $C^0$ Lagrangians Hamiltonian homeomorphic to the zero section.  The main technical step for doing so, which is of independent interest, consists of proving that Lagrangian spectral invariants can be defined for such $C^0$ Lagrangians.  This is achieved in Section \ref{sec:Spec-invar-C0-Lag}; see  Theorem \ref{prop:lag-spec-extension} therein.  Theorem \ref{theo:Arnold_Lagrangians} is proven in Section \ref{sec:proof_Lagrangians}. We prove Proposition \ref{prop:single-intersection} in Section \ref{sec:single-intersection}.
Lastly, Theorem \ref{theo:Arnold_Legendrians} is proven in Section \ref{sec:legendrians}.

\subsection*{Acknowledgments}
We dedicate this article to Claude Viterbo whose works in mathematics have deeply influenced ours.  Not only that, Claude's constant support and interest  in our research, since the very beginnings of our careers,  has been a great source of encouragement  to each one of us.   

Lemma \ref{lemma:variant_HLsword} was proven jointly with R\'emi Leclercq.  We are grateful to him for generously sharing his ideas with us.  We also thank Alberto Abbondandolo for pointing out to us the paper of Pierre Pageault \cite{Pageault} used in the proof of Proposition \ref{prop:single-intersection}. Our proofs of Theorems \ref{theo:Arnold_Lagrangians} and \ref{theo:Arnold_Legendrians} were inspired by the paper of Wyatt Howard \cite{Howard2012}. 

  The first author was partially supported by ERC Starting Grant 757585 and ISF Grant 2026/17. The second author was partially supported by the ANR project ``Microlocal'' ANR-15-CE40-0007.
  This material is based upon work supported by the National Science Foundation under Grant No. DMS-1440140 while the third author was in residence at the Mathematical Sciences Research Institute in Berkeley, California, during the Fall 2018 semester. The third author greatly benefited from the lively research atmosphere of the MSRI and would like to thank the members and staff of the MSRI for their warm hospitality.  The third author was partially supported by ERC Starting Grant 851701.

\section{Preliminaries from symplectic geometry}\label{sec:hamiltonian_homeos}
  For the remainder of this section $(M, \omega)$ will denote a connected symplectic manifold. Recall that a symplectic diffeomorphism is a diffeomorphism $\theta: M \to M$ such that $\theta^* \omega = \omega$.   The set of all symplectic diffeomorphisms of $M$ is denoted by $\Symp(M, \omega)$.  
  Hamiltonian diffeomorphisms constitute an important class of examples of symplectic diffeomorphisms.  These are defined as follows: A smooth Hamiltonian $H \in C_c^{\infty} ([0,1] \times M)$  gives rise to a time-dependent vector field $X_H$ which is defined via the equation: $\omega(X_H(t), \cdot) = -dH_t$.  The Hamiltonian flow of $H$, denoted by  $\phi^t_H$, is by definition the flow of $X_H$.  A  compactly supported Hamiltonian diffeomorphism is a diffeomorphism which arises as the time-one map of a Hamiltonian flow  generated by a compactly supported Hamiltonian.  The set of all compactly supported Hamiltonian diffeomorphisms is denoted by $\Ham_{c}(M, \omega)$; this forms a normal subgroup of $\Symp(M, \omega)$.
  
\subsection{Symplectic \& Hamiltonian homeomorphisms}
 We equip $M$ with a Riemannian distance $d$. Given two maps $\phi, \psi :M \to M,$ we denote
$$d_{C^0}(\phi,\psi)= \max_{x\in M}d(\phi(x),\psi(x)).$$
We will say that a sequence of compactly supported maps $\phi_i : M \rightarrow M$, $C^0$--converges to $\phi$, if there is a compact subset of $M$ which contains the supports of all $\phi_i$'s and if $d_{C^0}(\phi_i, \phi) \to 0$ as $ i \to \infty$. Of course, the notion of $C^0$--convergence does not depend on the choice of the Riemannian metric.

\begin{definition} \label{def:sympeo}
 A homeomorphism  $\theta : M \to M$  is said to be symplectic if it is the $C^0$--limit of a sequence of symplectic diffeomorphisms.  We will denote the set of all symplectic homeomorphisms by $\Sympeo(M, \omega)$.  
\end{definition}

The Eliashberg--Gromov theorem states that a symplectic homeomorphism which is smooth is itself a symplectic diffeomorphism. We remark that if $\theta$ is a symplectic homeomorphism, then so is $\theta^{-1}$.  In fact, it is easy to see that $\Sympeo(M, \omega)$ forms a group. 

\begin{definition}  \label{def:hameo} 
A symplectic homeomorphism $\phi $ is said to be a Hamiltonian homeomorphism if it is the $C^0$--limit of a sequence of Hamiltonian diffeomorphisms.  We will denote the set of all Hamiltonian homeomorphisms by $\overline{\Ham}(M, \omega)$. 
\end{definition}

  It is not difficult to see that  $\overline{\Ham}(M, \omega)$ forms a normal subgroup of  $\Sympeo(M, \omega)$.   It is a long standing open question whether a smooth Hamiltonian homeomorphism, which is isotopic to identity in $\Symp(M,\omega) $, is a Hamiltonian diffeomorphism;  this is often referred to as the $C^0$ Flux conjecture; see \cite{LMP, Sey13c, buhovsky14}.  
  
  We should add that alternative definitions for Hamiltonian homeomorphisms do exist within the literature of $C^0$ symplectic topology.  Most notable of these is a definition given by M\"uller and Oh in \cite{muller-oh}. A homeomorphism which is Hamiltonian in the sense of \cite{muller-oh} is necessarily Hamiltonian in the sense of Definition \ref{def:hameo} and thus, the results of this article apply to the homeomorphisms of \cite{muller-oh} as well. 
  
  \subsection{Hofer's distance}\label{sec:hofer_distance}  
  We will denote the Hofer norm on $C_c^{\infty}([0,1] \times M)$ by  \[ \| H \| = \int_0^1 \left( \max_{x \in M} H(t,\cdot) - \min_{x \in M} H(t, \cdot)\right) dt.\]   The Hofer distance on $\Ham(M, \omega)$ is defined via $$d_{\mathrm{Hofer}}(\phi, \psi)= \inf \Vert H-G\Vert,$$ where the infimum is taken over all $H, G$ such that $\phi^1_H = \phi$ and $\phi^1_G = \psi$.   This defines a bi-invariant distance on $\Ham(M, \omega)$.  
  
  Given $B\subset M$, we define its \emph{displacement energy} to be $$e(B):= \inf \{ d_{\mathrm{Hofer}}(\phi, \id): \phi \in \Ham(M, \omega), \phi(B) \cap B = \emptyset\}.$$
Non-degeneracy of the Hofer distance is a consequence of the fact that $e(B) >0$ when $B$ is an open set.  This fact was proven in \cite{hofer90, polterovich93, lalonde-mcduff}.

\section{Preliminaries on spectral invariants}\label{sec:prelim_spec}

 We fix a ground field $\F$, e.g.  $\Z_2, \mathbb{Q}$, or $\mathbb{C}$. Singular homology, Floer homology and all notions relying on these theories depend on the field $\F$. 

\subsection{Min-max critical values and Lusternik-Schnirelmann theory}\label{sec:LS_theory}

Let $f \in C^{\infty}(M)$ a smooth function on a closed and connected manifold $M$. For any $a \in \R$, let $M^{a} = \{x \in M: f(x) < a \}$.  Let 
$\alpha \in H_*(M)$ be a non-zero singular homology class and define 
$$\cLS(\alpha,f) := \inf \{a \in \R: \alpha \in \mathrm{Im} (i_a^*) \},  $$
where  $i_a^*: H_*(M^a) \rightarrow H_*(M)$ is the map induced in homology by the natural inclusion $i_a : M^a \hookrightarrow M$.
  The number $\cLS(\alpha,f)$  is a critical value of $f$ and such critical values are often referred to as \emph{homologically essential} critical values.

The function $\cLS  : H_*(M) \setminus \{0\} \times C^{\infty}(M) \rightarrow \R$ is called a \emph{min-max} critical value selector.  In the following proposition $[M]$ denotes the fundamental class of $M$ and $[pt]$ denotes the class of a point.

\begin{prop}\label{prop:cLS}
The min-max critical value selector $\cLS$ possesses the following properties.
\begin{enumerate}
\item $\cLS(\alpha, f)$ is a critical value of $f$,
\item $\cLS([pt],f) = \min(f) \leq \cLS(\alpha,f) \leq \cLS([M],f) = \max(f)$, 
\item $\cLS(\alpha\cap\beta, f) \leq \cLS(\alpha, f),$ for any $\beta \in H_*(M)$ such that $\alpha\cap\beta\neq 0$,
\item Suppose that $\deg(\beta)<\dim(M)$ and  $\cLS( \alpha \cap \beta, f) = \cLS (\alpha, f)$. Then, the set of  critical points of $f$ with critical value $\cLS (\alpha, f)$ is homologically non-trivial.
\end{enumerate}
\end{prop}

The above are well-known results from Lusternik-Schnirelmann theory and hence we will not present a proof here.  For further details, we refer the reader to \cite{LS, cornea-lupton-oprea, viterbo}.

\subsection{Spectral invariants for Lagrangians}\label{sec:spec_inv_lag}
 
 Let $N$ be a closed manifold.  The canonical symplectic structure on the cotangent bundle $T^*N$ is induced by the form $\omega_0 =  -d \lambda$  where $\lambda = p\, dq$.  
 We will denote by $\mathrm{Lag}$ the space of Lagrangian submanifolds of $T^*N$ which are Hamiltonian isotopic to the zero section, i.e.\  $\mathrm{Lag}:= \{\phi(O_N): \phi \in \Ham_c(T^*N, \omega_0)\}$.
 
  Consider $\phi \in \Ham_c(T^*N, \omega_0)$ and let $L = \phi(O_N)$.  We will briefly explain how one may associate Lagrangian spectral invariants to the Hamiltonian diffeomorphism $\phi$.  Pick a compactly supported Hamiltonian $H\in C^\infty_c([0,1]\times T^*N)$ such that $\phi = \phi^1_H$. The  action functional associated to $H$ is defined by
\begin{align*}
   \mathcal{A}_H : \Omega(T^*N) \rightarrow \R \;, \quad z \mapsto \int_0^1 H_t(z(t))\, dt - \int z^*\lambda
\end{align*}
where $\Omega(T^*N)=\{ z : [0,1] \rightarrow T^*N \,|\, z(0) \in O_N, \; z(1) \in O_N \}$. The critical points of $\mathcal{A}_H$ are the chords of the Hamiltonian vector field $X_H$ which start and end on $O_N$. Note that such chords are in one-to-one correspondence with $L \cap O_N$.  The spectrum of $\mathcal{A}_H$ consists of the critical values of $\mathcal{A}_H$. It is a nowhere dense subset of $\R$ which turns out to depend only on the time--1 map $\phi_H^1$, hence we will  denote it by $\Spec(L; \phi)$.

Now, using Lagrangian Floer homology, in a manner similar to what was done in the previous section, one can define a mapping $$ \ell : H_*(N) \setminus \{0\} \times \Ham_c(T^*N, \omega_0) \rightarrow \R$$ 
which associates to a homology class $a \in  H_*(N) \setminus \{0\}$ a value in $\Spec(L; \phi)$.  These numbers are often referred to as the Lagrangian spectral invariants of $\phi$.  They were first introduced by Viterbo in \cite{viterbo} via generating function techniques.  The Floer theoretic approach was carried out by Oh \cite{Oh99}.  Lagrangian spectral invariants have many properties some of which are listed below.  For a more comprehensive list of their properties, as well as a survey of their construction, we refer the reader to \cite{MVZ}; see for example Theorems 2.11 and  2.17 in \cite{MVZ}.

\begin{prop}\label{prop:Lag_spec}  The map $ \ell : H_*(N) \setminus \{0\} \times \Ham_c(T^*N, \omega_0) \rightarrow \R,$ satisfies the following properties:
 \begin{enumerate}
 
 \item  $\ell(a, \phi) \in \Spec(L; \phi)$,
 \item $\vert \ell(a, \phi^1_H) - \ell(a, \phi^1_G) \vert \leq \Vert H-G \Vert$,
 
 \item $ \ell(a \cap b, \phi \psi) \leq  \ell(a, \phi) + \ell(b, \psi)$, 
 
 \item $\ell([pt], \phi) \leq  \ell(a, \phi) \leq \ell([N], \phi)$, 
 
 \item $\ell([N], \phi) = - \ell([pt], \phi^{-1})$,
 \item If $\phi(O_N) = \psi(O_N)$, then $\exists \; C \in \R$ such that $\ell(a,\phi) = \ell(a, \psi) + C$ for all $a \in H_*(N) \setminus \{0\}$, 
 \item Suppose that $f: N \rightarrow \R$ is a smooth function and define the Lagrangian $L_f: = \{(q, \partial_qf(q)): q \in N \}$.   
   Denote by $F$ any compactly supported Hamiltonian of $T^*N$ which coincides with  $\pi^*f=f\circ \pi$ on a ball bundle $T^*_RN$ of $T^*N$ containing $L_f$.
   Then,  $\ell(a, \phi^1_F) = c_{LS}(a, f)$ for all $a \in H_*(N) \setminus \{0\}.$
 \item For any other manifold $N'$, the spectral invariants on $T^*(N\times N')$ satisfy
   \[\ell(a\otimes a',\phi\times\phi')=\ell(a,\phi)+\ell(a',\phi'),\] for all $\phi\in\Ham_c(T^*N,\omega)$, $\phi'\in\Ham_c(T^*N',\omega)$, $a\in  H_*(N) \setminus \{0\}$ and $a'\in  H_*(N') \setminus \{0\}$.
\end{enumerate}
 \end{prop}

 Note that the sixth property above tells us that spectral invariants $ \ell(a, \phi)$ are essentially invariants of the Lagrangian $L := \phi(O_N)$.   As a consequence of this property, the set of spectral invariants of $L $ is well-defined upto a shift by a constant.  In particular, we can make sense of the total number of spectral invariants of any Lagrangian $L$ which is Hamiltonian isotopic to the zero section. Similarly,  we see that $ \gamma: \mathrm{Lag} \rightarrow \R,$ defined by  
 
 \begin{equation}\label{eq:Lag_gamma}
 \gamma( \phi(O_N)):= \ell([N], \phi) -\ell([pt], \phi)
 \end{equation} is well-defined, \emph{i.e.}\ it only depends on the Lagrangian $\phi(O_N)$ and not on $\phi$. Viterbo showed in \cite{viterbo} that $\gamma$ induces a non-degenerate distance on $\mathrm{Lag}$.

 Finally, we should mention that Lagrangian spectral invariants have been constructed in settings more general than what is described above by Leclercq \cite{Lec} and Leclercq-Zapolsky \cite{Lec-Zap}.
 
 \bigskip
 
\noindent \textbf{Hamiltonian Spectral Invariants:}  In order to prove that Lagrangian spectral invariants can be defined for $C^0$ Lagrangians Hamiltonian homeomorphic to the zero section, that is to prove Theorem \ref{prop:lag-spec-extension} below, we will need to use certain results from the theory of Hamiltonian spectral invariants.  Here, we will briefly recall the aspects of this theory which will be needed below.   For further details on the construction of these invariants see \cite{schwarz, Oh05}.  The specific result used here, which compares Lagrangian and Hamiltonian spectral invariants, was proven in \cite{MVZ}.

Given $\phi \in \Ham_c(T^*N, \omega_0)$ and $a \in H_*(N) \setminus \{0\}$, using Hamiltonian Floer homology, one can define the Hamiltonian spectral invariant $c(a, \phi)$; this is a real number which belongs to the (Hamiltonian) action spectrum of $\phi$, {\it i.e.}\ there exists a fixed point of $\phi$ whose action is  the value $c(a, \phi)$.  These spectral invariants satisfy a list of properties similar to those listed in Proposition \ref{prop:Lag_spec}.  We will be needing the following property which is proven in \cite{MVZ}: For any $\phi \in \Ham_c(T^*N, \omega_0)$ and any $a \in H_*(N) \setminus \{0\}$ we have  
\begin{equation}\label{eq:MVZ_inequality}
c([pt], \phi) \leq \ell(a, \phi) \leq c([N], \phi).
\end{equation}
 See Proposition 2.14 and item  \emph{iv} of Theorem 2.17 in \cite{MVZ}. 

Similarly to Equation \eqref{eq:Lag_gamma}, we define $\gamma: \Ham_c(T^*N, \omega_0) \to \R$ via 
\begin{equation}\label{eq:Ham_gamma}
\gamma(\phi) : = c([N], \phi) - c([pt], \phi).
\end{equation}
Like its Lagrangian cousin, $\gamma$ induces a non-degenerate distance on $\Ham_c(T^*\\N, \omega_0)$.  We will need the following properties:

\begin{enumerate}
\item \textbf{Comparison Inequality:} As an immediate consequence of Equation \ref{eq:MVZ_inequality}, the Lagrangian version of $\gamma$ is smaller than the Hamiltonian version.  More precisely, for any $\phi \in \Ham_c(T^*N, \omega_0)$  we have
\begin{equation}\label{eq:MVZ_inequality_gamma}
 \gamma( \phi(O_N) ) \leq \gamma(\phi).
\end{equation}

\item  \textbf{Conjugacy Invariance:} For any $\phi \in \Ham_c(T^*N, \omega_0)$ and any symplectic diffeomorphism  $\psi$ of $T^*N$, we have
\begin{equation}\label{eq:conj_invariance_gamma}
\gamma( \phi) = \gamma( \psi \phi \psi^{-1}).
\end{equation}

\item \textbf{Triangle Inequality:}  For any $\phi, \psi \in \Ham_c(T^*N, \omega_0)$, we have 

\begin{equation}\label{eq:triangle}
\gamma(\phi \psi) \leq \gamma(\phi) + \gamma(\psi).
\end{equation}

\item \textbf{Energy-Capacity Inequality:}  Suppose that the support of $\phi$ can be displaced, then 
\begin{equation}\label{eq:energy_capacity}
\gamma(\phi) \leq 2 e(\mathrm{supp}(\phi)),
\end{equation}
where $e(\mathrm{supp}(\phi))$ is the displacement energy of $\mathrm{supp}(\phi)$.
\end{enumerate}

  \subsection{Spectral invariants for Legendrians via generating functions}\label{sec:spec_inv_leg}
Once again let $N$ be a closed manifold.  The standard contact structure on the 1-jet bundle $J^1N = T^*N \times \R$ is induced by the contact form $\alpha = dz - \lambda $, where $z$ is the coordinate on $\R$.  We will denote by $\mathrm{Leg}$ the space of Legendrian submanifolds of $J^1N$ which are contact isotopic to the zero section.   It was proven by Chaperon \cite{chaperon} and Chekanov \cite{chekanov} that  for every $L \in \mathrm{Leg}$ there exists a generating function quadratic at infinity (gfqi) $S : N \times E \rightarrow \R$, where $E$ is some auxiliary vector space, such that 
$$L = \left\{\left(q, \frac{\partial S}{\partial q}(q, e), S(q,e)\right)\,:\, \frac{\partial S}{\partial e} (q,e) = 0 \right\}.$$

Observe that critical points of $S$ correspond to the intersection points of $L$ with the zero wall $O_N \times \R$: $(q,e)$ is a critical point of $S$ if and only if $(q, 0, S(q,e))$ is a point on $L$.  Note that one can obtain the critical value of a given critical point of $S$ by simply reading the $z$--coordinate of the corresponding intersection point of $L$ with the zero wall.

By applying a min-max construction similar to that of Section \ref{sec:LS_theory} to the gfqi $S$, one can define Legendrian spectral invariants of the Legendrian $L$:
$$ \ell : H_*(N) \setminus \{0\} \times \mathrm{Leg} \rightarrow \R.$$
The fact that $\ell(a, L)$ does not depend on the choice of the gfqi $S$ is a consequence of the uniqueness theorem of Th\'eret and Viterbo \cite{theret, viterbo}. For further details on the construction see \cite{Zapolsky}.

We will now state those properties of Legendrian spectral invariants which will be used below.  

\begin{prop}\label{prop:Leg_spec}[See \cite{Zapolsky}]  The map $ \ell : H_*(N) \setminus \{0\} \times \mathrm{Leg} \rightarrow \R,$ satisfies the following properties:
 \begin{enumerate}
 \item  $\ell(a, L)$ is a critical value of the corresponding  gfqi $S$,

 \item The map $\ell(a,\cdot): \mathrm{Leg}\rightarrow \R$ is continuous with respect to the $C^{\infty}$ topology,
 \item $ \ell(a \cap b, L + L') \leq \ell(a,L) + \ell(b, L')$, for all $L, L'\in\mathrm{Leg}$ such that $L + L' : = \{(q, p+ p', z + z'): (q,p,z) \in L, (q, p', z') \in L' \}$ is a smooth Legendrian submanifold contact isotopic to the $0$-section. 
 \item Suppose that $f: N \rightarrow \R$ is a smooth function and define the Legendrian $L_f: = \{(q, \partial_qf(q), f(q)): q \in N \}$. Then,  $\ell(a, L_f) = c_{LS}(a, f)$ for all $a \in H_*(N) \setminus \{0\}$.
\end{enumerate}
 \end{prop}

\begin{remark}
A proof of item 3 in Proposition  \ref{prop:Leg_spec} is based on the following observation: If $S, S'$ are gfqi's for $L, L'$, respectively, then $S \oplus S' :N \times E \times E' \rightarrow \R$ defined  by $S\oplus S'(q,e,e'):=S(q,e)+S'(q,e')$ is a gfqi for the Legendrian  $L + L'$.
\end{remark}


 \section{$C^0$ Lagrangians, proof of Theorem \ref{theo:Arnold_Lagrangians}  and Proposition \ref{prop:single-intersection}}\label{sec:lagrangians}

 The first two subsections in this section are devoted to the proof of Theorem \ref{theo:Arnold_Lagrangians}. In the third, we prove Proposition \ref{prop:single-intersection}.
We begin by giving a precise definition of compactly supported Hamiltonian homeomorphisms of $T^*N$.

  Equip $N$ with a Riemannian metric and denote by $T^*_rN:= \{(q, p) \in T^*N: \|p \| < r\}$ the cotangent disc bundle of radius $r >0$.  We define $\Ham_c(T^*_rN, \omega_0)$ to be the set of Hamiltonian diffeomorphisms whose support is  contained in $T^*_rN$.  A compactly supported Hamiltonian homeomorphism is a homeomorphism which belongs to the uniform closure of $\Ham_c(T^*_rN, \omega_0)$ for some $r>0$; we will denote their collection by  $\overline{\Ham}_c(T^*N, \omega_0)$.

\subsection{Spectral invariants for $C^0$ Lagrangians} \label{sec:Spec-invar-C0-Lag}
We will now prove that Lagrangian spectral invariants can be defined for $C^0$ Lagrangians of the form $L= \phi(O_N)$ where $\phi \in \overline{\Ham}_c(T^*N, \omega_0)$.   Below is the continuity result which allows us to define spectral invariants for such $C^0$ Lagrangians. 

\begin{theo}\label{prop:lag-spec-extension}
   Lagrangian spectral invariants satisfy the following two properties:  
\begin{enumerate}
\item  For any homology class $a\in  H_*(N) \setminus \{0\}$, the map \[ \ell(a,\cdot) :\Ham_c(T^*N, \omega_0) \rightarrow \R\] is continuous with respect to the $C^0$ topology on $\Ham_c(T^*N, \omega_0)$ and extends continuously to the closure $\overline{\Ham}_c(T^*N, \omega_0)$.
\item  If $\phi(O_N) = \psi(O_N)$, then $\exists \; C \in \R$ such that $\ell(a,\phi) = \ell(a, \psi) + C$ for all $a \in H_*(N) \setminus \{0\}$ and for any $\phi, \psi \in \overline{\Ham}_c(T^*N, \omega_0)$.
\end{enumerate}
\end{theo}

Note that as a consequence of the second item, we can define the spectral invariants of a $C^0$ Lagrangian Hamiltonian homeomorphic to the zero section, upto shift.  In particular, it makes sense to speak of the number of spectral invariants of such a $C^0$ Lagrangian.

\medskip

The first part of the above theorem follows from techniques which have by now become rather standard in $C^0$ symplectic topology and hence, we will only sketch a proof of this part of the theorem. The second part of the statement, however, is based on a trick  which was recently introduced in our article \cite{BHS2} in the course of proving $C^0$ continuity of spectral invariants for Hamiltonian diffeomorphisms; see Theorem 1.1 therein.

\begin{proof}[Proof of Theorem \ref{prop:lag-spec-extension}]
  We begin with the proof of the first statement. We will be needing the following claim.

\begin{claim}\label{cl:epsilon_shift}  For every $r>0$,  there exist constants $C, \delta >0$,  depending on $r$, such that for any $\psi \in \Ham_c(T^*_rN, \omega_0)$, if $d_{C^0}(\id, \psi) \leq \delta$, then $|\ell(a, \psi)| \leq C d_{C^0}(\id, \psi)$. 
\end{claim}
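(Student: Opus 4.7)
The plan is to exploit the sandwich inequality \eqref{eq:MVZ_inequality} between Lagrangian and Hamiltonian spectral invariants,
\[ c([pt], \psi) \leq \ell(a, \psi) \leq c([N], \psi), \]
which reduces the desired bound on $|\ell(a,\psi)|$ to the analogous bounds $|c([pt], \psi)|, |c([N], \psi)| \leq C\, d_{C^0}(\id, \psi)$ for $\psi \in \Ham_c(T^*_r N, \omega_0)$ sufficiently $C^0$-close to the identity. The advantage of passing to the Hamiltonian side is that the powerful triangle, conjugation and energy-capacity inequalities \eqref{eq:triangle}--\eqref{eq:energy_capacity} are available for $\gamma$.

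I would attack the Hamiltonian bound in two steps. First, I would bound the spectral norm $\gamma(\psi) = c([N], \psi) - c([pt], \psi)$ linearly in $d_{C^0}(\id, \psi)$. The idea is to fragment $\psi$ as a product $\psi = \psi_1 \cdots \psi_k$ of Hamiltonian diffeomorphisms, each $\psi_j$ supported in a set whose displacement energy is controlled by $d_{C^0}(\id, \psi)$; the triangle inequality \eqref{eq:triangle} and the energy-capacity inequality \eqref{eq:energy_capacity} then combine to give
\[ \gamma(\psi) \leq \sum_j \gamma(\psi_j) \leq 2 \sum_j e(\supp \psi_j) \leq C\, d_{C^0}(\id, \psi). \]
Second, I would pin one of the two values near zero. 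Since $\psi$ has compact support, any point $x$ outside $\supp \psi$ is a fixed point of $\psi$ whose constant orbit has action $0$ for any compactly supported generating Hamiltonian, so $0 \in \Spec(\psi)$. Standard properties of Hamiltonian spectral invariants on $T^*N$ then force $c([pt], \psi) \leq 0 \leq c([N], \psi)$. Together with the $\gamma$-bound, this yields $|c([pt], \psi)|, |c([N], \psi)| \leq C\, d_{C^0}(\id, \psi)$, and the sandwich inequality finishes the proof.

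The main obstacle is the linear bound in the first step. A naive fragmentation into symplectic balls of radius $\rho$ yields displacement energies of order $\rho^2$, and summing over the $O(\rho^{-2n})$ pieces required to cover $T^*_r N$ produces a bound that diverges as $\rho \to 0$. To obtain a genuinely linear rate, one must fragment $\psi$ in a way adapted to the cotangent bundle structure: the fragments should have $p$-extent proportional to $d_{C^0}(\id, \psi)$ while their $q$-extent remains $O(1)$, so that their displacement energies scale linearly in $d_{C^0}(\id, \psi)$ rather than quadratically. Engineering such a decomposition, and verifying that it can be carried out uniformly over $\psi \in \Ham_c(T^*_r N, \omega_0)$, is the delicate technical point.
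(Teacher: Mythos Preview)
Your approach is exactly the paper's: reduce via \eqref{eq:MVZ_inequality} to the Hamiltonian spectral invariants, then invoke the linear bound $|c(\cdot,\psi)|\leq C\,d_{C^0}(\id,\psi)$ for those. The paper does not spell out that second step either --- it simply cites \cite{Sey12} and remarks that the argument there (for closed aspherical manifolds) adapts to compactly supported diffeomorphisms of $T^*N$. So your proposal and the paper's proof are at the same level of completeness, and the ingredients you list (fragmentation, triangle inequality \eqref{eq:triangle}, energy--capacity \eqref{eq:energy_capacity}) are precisely the ones used in \cite{Sey12}.

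One warning about the specific fix you float for the fragmentation. Your diagnosis of the naive ball-cover is correct, but slicing $T^*_rN$ into pieces of $p$-width comparable to $\varepsilon:=d_{C^0}(\id,\psi)$ runs into the same counting problem: it takes roughly $r/\varepsilon$ such slices to cover $T^*_rN$, and summing that many displacement energies of order $\varepsilon$ yields a bound of order $r$, not of order $\varepsilon$. The argument in \cite{Sey12} avoids this by using a \emph{fixed} finite cover by Darboux charts (the number of pieces independent of $\varepsilon$), fragmenting $\psi$ along it so that each $\psi_j$ is supported in one chart and still satisfies $d_{C^0}(\id,\psi_j)\lesssim\varepsilon$, and then proving $\gamma(\psi_j)\leq C\varepsilon$ for each such fragment via a small $\varepsilon$-translation inside the chart rather than by displacing the whole chart. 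Your observation that compact support forces $c([pt],\psi)\leq 0\leq c([N],\psi)$ is correct and is exactly what converts the $\gamma$-bound into the two-sided estimate on $c([pt],\psi)$ and $c([N],\psi)$.
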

\begin{proof}[Proof of Claim \ref{cl:epsilon_shift}]

As a consequence of Inequality \eqref{eq:MVZ_inequality}, it is sufficient to prove the result for the Hamiltonian spectral invariants. This is proved in \cite{Sey12}  in the case of symplectically aspherical closed manifolds; see see Theorem 1 therein. The proof given in \cite{Sey12} easily adapts to our settings.  
\end{proof}

Claim \ref{cl:epsilon_shift} proves continuity of our map at the identity.  Next, we consider $\id \neq \phi \in \Ham_c(T^*_rN, \omega_0)$.  We leave it to the reader to check that Properties 3, 4 and 5 in Proposition \ref{prop:Lag_spec} yield the following: $$| \ell(a, \phi \psi) - \ell(a, \phi) |\leq  \max\{ |\ell([N], \psi)|, |\ell([pt], \psi)|\}.$$  
Combining this with Claim \ref{cl:epsilon_shift} we conclude that for any $\phi, \psi \in \Ham_c(T^*_rN, \omega_0)$
$$d_{C^0}(\id, \psi) \leq \delta \implies |\ell(a, \phi \psi) - \ell(a, \phi)| \leq C d_{C^0}(\id, \psi).$$
This proves that $\ell(a, \cdot) :  \Ham_c(T^*_rN, \omega_0) \rightarrow \R$ is locally Lipschitz continuous.  Hence, it extends continuously to the closure $\overline{\Ham}_c(T^*_rN,\omega_0)$.  This finishes the proof of the first statement of the theorem.
\end{proof}

\medskip

We now turn our attention to the second statement of the theorem.  We begin with the following apriori weaker statement.

\begin{theo}\label{lem:zero_section} Let $\phi\in  \overline{\Ham}_c(T^*N, \omega_0)$ be a Hamiltonian homeomorphism. 
If $\phi(O_N) = O_N$, then there exists a constant $C$ such that $\ell(a, \phi) = C$  for all $a \in H_*(N) \setminus \{0\}$.
\end{theo}

 Note that in the case where $\phi$ is a smooth Hamiltonian diffeomorphism, the above theorem reduces to Property 6 in Proposition \ref{prop:Lag_spec}.

\begin{remark}\label{rem:Viterbo_conj} It can be checked that Theorem \ref{lem:zero_section} is a consequence of the following conjecture of Viterbo: If $L_i \subset T^*N$ is a sequence of Lagrangians Hamiltonian isotopic to the zero section, which Hausdorff converges to the zero section $O_N$, then $\gamma(L_i) \to 0$.  This conjecture has been established in several case by Shelukhin, e.g. $N=S^n, \C P^n, \T^n$ and others; See \cite{shelukhin-zoll,shelujhin-string}.
\end{remark}
Let us prove that the result follows from the above theorem.  Suppose that $\phi(O_N) = \psi(O_N)$.  First, note that, as a consequence of the third item in Proposition \ref{prop:Lag_spec}, we have the following inequality: 
$$-\ell([N],  \phi^{-1} \psi ) \leq \ell(a, \phi) -\ell(a, \psi) \leq \ell([N],  \psi^{-1} \phi).$$
Hence, it is sufficient to show that $\ell([N],  \psi^{-1} \phi)= -\ell([N], \phi^{-1} \psi )$.  Now, by the fifth item of Proposition \ref{prop:Lag_spec}, $-\ell([N], \phi^{-1} \psi ) = \ell([pt],  \psi^{-1} \phi)$ and by Theorem \ref{lem:zero_section} we have $\ell([pt],\psi^{-1} \phi )= \ell([N], \psi^{-1} \phi )$. 

It remains to prove Theorem \ref{lem:zero_section}.   The proof we present below relies on  an idea similar to what was used in the proof of Theorem 1.1 of \cite{BHS2}.

\begin{proof}[Proof of Theorem \ref{lem:zero_section}]  Pick a sequence  $\phi_i$ in $ \Ham_c(T_\rho^*N, \omega_0)$ which converges uniformly to $\phi$ (for some $ \rho > 0 $).    By Theorem \ref{prop:lag-spec-extension}, it is enough to show that there exists a constant $C$ such that $ \ell(a, \phi_i) \to C$ for any $a\in H_*(N) \setminus \{0\}$. Denote $L_i := \phi_i(O_N)$ and observe that, as a consequence of the fourth property in Proposition \ref{prop:Lag_spec}, it is sufficient to show that $\gamma(L_i)$  converges to zero.

  As we will now explain, we may assume without loss of generality that $\phi$ admits a fixed point on the zero section $O_N$. Indeed, fix $p\in O_N$ and pick a Hamiltonian $G$ which vanishes on the zero section such that $\phi\circ \phi_G^1(p)=p$. For all $i$, we have $\gamma(\phi_i\circ\phi_G^1)=\gamma(\phi_i)$, by the sixth item of Proposition \ref{prop:Lag_spec}. Thus, we can replace $\phi_i$ by $\phi_i\circ\phi_G^1$ and $\phi$ by $\phi\circ\phi_G^1$.

Observe that the Lagrangians $L_i$ converge in Hausdorff topology to the zero section, \emph{i.e.}\ for any $\delta > 0$ we have $L_i \subset T^*_{\delta} N$ for $i$ sufficiently large.    We will reduce the theorem to the following lemma which was obtained jointly with R. Leclercq.  A variant of this lemma was established in \cite{HLS13}; see Lemma 8 therein.
  
  Given $B \subset N$, we denote $T^*B := \{(q,p) \in T^*N: q \in B\}$ and  $O_B := \{(q,0) : q \in B\}$. 
\begin{lemma}\label{lemma:variant_HLsword} 
Let $L_i$ denote a sequence of Lagrangians in $T^*N$ which are Hamiltonian isotopic to $O_N$. Suppose that there exists a ball $B \subset N$ such that $L_i \cap T^*B = O_B$.  If the sequence $L_i$ Hausdorff converges to $O_N$, then $\gamma(L_i) \to 0$.
\end{lemma}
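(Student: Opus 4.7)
The plan is to bound $\gamma(L_i)$ via the energy-capacity and comparison inequalities. By the comparison inequality~(\ref{eq:MVZ_inequality_gamma}), it suffices to construct Hamiltonian diffeomorphisms $\phi_i \in \Ham_c(T^*N, \omega_0)$ with $\phi_i(O_N) = L_i$ and $\gamma(\phi_i) \to 0$.

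Fix an open ball $B' \Subset B$. Using the Hausdorff convergence $L_i \to O_N$, choose $\epsilon_i \to 0$ with $L_i \subset T^*_{\epsilon_i/2}N$ for $i$ large. The first step, which is the main technical hurdle, is a relative Hamiltonian isotopy extension: show that $\phi_i$ can be chosen with support contained in $K_i := T^*_{\epsilon_i}(N \setminus \overline{B'})$. Starting from any compactly supported Hamiltonian $H_i$ with $\phi_{H_i}^1(O_N) = L_i$, the strategy is to use the coincidence $L_i \cap T^*B = O_B$ together with a standard relative isotopy argument inside $T^*B$ to modify $H_i$ so that the resulting Hamiltonian isotopy fixes a neighborhood of $T^*\overline{B'}$ pointwise, and then cut off in the fiber direction, using that $L_i$ lies in $T^*_{\epsilon_i/2}N$, so that the new Hamiltonian is supported in $K_i$ while its time-one map still sends $O_N$ to $L_i$.

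The remaining task is to bound the displacement energy $e(K_i)$. Choose a Morse function $f : N \to \R$ whose critical points all lie in $B'$; this is possible by pulling back any Morse function by an ambient diffeomorphism that sends its finite critical set into the nonempty open ball $B'$ (using that $N$ is connected). Set $c := \min_{q \in N \setminus \overline{B'}} |df(q)| > 0$. Consider the compactly supported, time-independent Hamiltonian $H(q,p) := s f(q)\chi(|p|)$, where $s$ is slightly larger than $2\epsilon_i/c$ and $\chi$ is a smooth radial cutoff equal to $1$ on $[0, R_i]$ with $R_i = O(\epsilon_i)$ chosen large enough to contain the forthcoming orbit, and vanishing outside a slightly larger interval. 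Inside the cutoff region the flow is vertical, given by $(q,p) \mapsto (q, p - t s\, df(q))$, so at time one, for $q \in N \setminus \overline{B'}$ and $|p| < \epsilon_i$, the new fiber coordinate satisfies $|p - s\, df(q)| \geq sc - \epsilon_i > \epsilon_i$, displacing $K_i$. The Hofer norm of $H$ is at most $s \cdot \mathrm{osc}(f) = O(\epsilon_i)$, whence $e(K_i) = O(\epsilon_i) \to 0$.

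Combining these ingredients, the energy-capacity inequality~(\ref{eq:energy_capacity}) gives $\gamma(\phi_i) \leq 2 e(K_i) \to 0$, and then~(\ref{eq:MVZ_inequality_gamma}) yields $\gamma(L_i) \to 0$. The main obstacle is the first step, namely the simultaneous localization of the support of $\phi_i$ in both the base direction (off $B'$) and the fiber direction (inside $T^*_{\epsilon_i}N$) while preserving $\phi_i(O_N) = L_i$. These two localizations must be carried out in the right order, exploiting the coincidence on $T^*B$ before the Hausdorff closeness, and this is the essentially new technical input beyond the related Lemma 8 in~\cite{HLS13}.
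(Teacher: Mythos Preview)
Your route is genuinely different from the paper's, and the difference matters. The paper never passes through the Hamiltonian invariant $\gamma(\phi_i)$ or the energy--capacity inequality. Instead it works entirely with the Lagrangian $\gamma$: for a given $\eps>0$ it picks a function $f:N\to\R$ with all critical points in $B$ and oscillation $<\eps$, forms $L_i+L_f$, and observes that for $i$ large the intersections of $L_i+L_f$ with $O_N$ lie over the critical points of $f$ (here is where both the hypothesis $L_i\cap T^*B=O_B$ and the Hausdorff convergence are used). Since $L_i\cap T^*B=O_B$ is connected, all action values there equal a single constant $C_i$, so the spectrum of $L_i+L_f$ sits in $[C_i+\min f,\,C_i+\max f]$, giving $\gamma(L_i+L_f)<\eps$; then Hofer continuity yields $\gamma(L_i)<3\eps$. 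No localization of a Hamiltonian diffeomorphism is needed.

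In your argument the serious gap is the first step: the existence of $\phi_i\in\Ham_c$ supported in $K_i=T^*_{\epsilon_i}(N\setminus\overline{B'})$ with $\phi_i(O_N)=L_i$. You describe this as ``cut off in the fiber direction, using that $L_i$ lies in $T^*_{\epsilon_i/2}N$'', but multiplying a Hamiltonian $H_i$ by a radial cutoff $\chi(|p|)$ changes the flow; the time-one image of $O_N$ agrees with $L_i$ only if the \emph{entire} isotopy $\phi_{H_i}^t(O_N)$ stays inside the region where $\chi\equiv 1$. Nothing in the hypotheses guarantees a Hamiltonian isotopy from $O_N$ to $L_i$ that remains in $T^*_{\epsilon_i/2}N$ throughout: Hausdorff closeness of $L_i$ to $O_N$ does not even force $L_i$ to be a graph over $N$, so there is no obvious ``short'' isotopy available. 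The phrase ``standard relative isotopy argument'' does not address this, and in fact producing such a fiberwise-localized isotopy is essentially as hard as the lemma itself (it is close in spirit to the Viterbo conjecture mentioned in Remark~\ref{rem:Viterbo_conj}). Without this step the energy--capacity bound has nothing to act on, so the argument does not close. The paper's approach avoids the issue entirely by never needing to control any Hamiltonian diffeomorphism realizing $L_i$.
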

\begin{proof} Pick $\phi_i \in \Ham_c(T^*N, \omega_0)$ such that $\phi_i(O_N) = L_i$.  We begin with the following observation:    Since $L_i \cap T^*B$ is connected, any  two points $(q_1, 0), (q_2, 0) \in L_i \cap T^*B$ have the same action.  Let $C_i$ denote this value.

For any given $\eps > 0$,  pick a smooth function $f: N \to \R$ whose critical points are all contained in $B$ and such that $ \max (f) - \min(f) < \eps$.  Denote by $ \pi : T^*N \rightarrow N $ the natural projection and define $F = \beta \,\pi^*f$ where $\beta : T^*N \rightarrow [0,1]$ is  compactly supported and $\beta=1$ on $T_R^*N$ where $R \gg 1$.  

Note that  $\phi^t_F(q,p) = (q, p+ t \, df(q))$ for $t\in [0,1]$ and $(q, p) \in T^*_1N$.  Therefore, $\phi^1_F\phi_i (O_N) = L_i + L_{f}$ where $L_i + L_{ f} := \{(q, p +  df(q)): (q, p) \in L_i\}$.  The Hausdorff convergence of the sequence $L_i$ to $O_N$ and the fact that $L_i \cap T^*B =  O_B$ combine together to imply that $(L_i + L_f) \cap O_N = \{(q, 0): df(q) = 0\}$ for $i$ large enough.  

It is easy to see that the action of $(q, 0) \in (L_i + L_f) \cap O_N$ is given by $C_i + f(q)$ where $C_i$ is the constant introduced above.  Therefore, $$\gamma(L_i + L_f) \leq \max (f) -\min (f) <   \eps.$$
On the other hand,  by the second property from Proposition \ref{prop:Lag_spec}, we have $\vert \gamma(L_i + L_f) - \gamma(L_i) \vert \leq 2 (\max (f) -\min (f) ) < 2 \eps$.  Combining this with the previous inequality we obtain $\gamma(L_i) < 3 \eps$ for $i$ large enough which proves the lemma.
\end{proof}

\medskip
The end of the proof of Theorem \ref{lem:zero_section} will consist in reducing to Lemma \ref{lemma:variant_HLsword}. We will assume from now on that $N$ has even dimension. The case where $N$ has odd dimension reduces to the even dimensional case by replacing $N$ with $N\times\S^1$ and all $\phi_i$'s by $\phi_i\times\id_{\S^1}$. 

We introduce for that the auxiliary maps
\begin{align*}\Phi_i=\phi_i \times\phi_i^{-1}:\ T^*N \times T^*N &\to T^*N \times T^*N,\\
   (x,y)&\mapsto (\phi_i(x),\phi_i^{-1}(y)),
\end{align*}
where we endow $T^*N\times T^*N$ with the symplectic form $\omega_0 \oplus \omega_0$; observe that this is canonically symplectomorphic to $T^*(N \times N)$ equipped with its canonical symplectic structure.

Denote $\overline{L}_i := \phi_i^{-1}(O_N)$ and note that $\Phi_i(O_{N\times N}) =  L_i \times \overline{L}_i$.   The map $\Phi_i$ is a Hamiltonian diffeomorphism which is not compactly supported.  To obtain a compactly supported Hamiltonian diffeomorphism, we  cut off the generating Hamiltonian of $\Phi_i$ far away from $O_{N\times N}$ and obtain a new Hamiltonian diffeomorphism which we will continue to denote by $\Phi_i$.   It is not difficult to see  that $\Phi_i$ remains unchanged on a large enough neighborhood of the zero section and so $\Phi_i (O_{N \times N})$ continues to be $L_i \times \overline{L}_i$.

Properties 8 and 5 of Proposition \ref{prop:Lag_spec} yield
\begin{equation}\label{eq:Phi-phi-Lag}\gamma(L_i \times \overline{L}_i )=\gamma(L_i) +\gamma(\overline{L}_i) = 2 \gamma(L_i).
\end{equation}

   Our proof crucially relies on the following lemma. 

\begin{lemma}\label{lemma:trick_Lag} Fix  $\eps>0$.   We can find a ball $B \subset N$, and $\Psi_i \in \Ham_c(T^*N \times T^*N, \omega_0 \oplus \omega_0)$ such that the following properties hold :

  \begin{enumerate}[(i)]
  \item $\gamma(\Psi_i(O_{N\times N}))<\eps$ for $i$ sufficiently large,
  \item $\Psi_i \Phi_i(O_{N\times N})$ converges in Hausdorff topology to $O_{N\times N}$,
  \item $\Psi_i \Phi_i (O_{N\times N}) \cap T^* (B \times B) =  O_{B \times B}$ for $i$ sufficiently large. 
  \end{enumerate}
\end{lemma}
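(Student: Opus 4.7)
The plan is to look for $\Psi_i$ in product form $\Psi_i=\alpha_i\times\beta_i$ with $\alpha_i,\beta_i\in\Ham_c(T^*N,\omega_0)$ each supported in a thin tube $T^*_\delta B'$ above a small ball $B'\subset N$. Using the product formula (item~8 of Proposition~\ref{prop:Lag_spec}) together with the comparison inequality \eqref{eq:MVZ_inequality_gamma} and the energy-capacity inequality \eqref{eq:energy_capacity}, one gets
\[
  \gamma(\Psi_i(O_{N\times N})) \;=\; \gamma(\alpha_i(O_N)) + \gamma(\beta_i(O_N)) \;\leq\; 2e(T^*_\delta B') + 2e(T^*_\delta B') \;=\; 4\,e(T^*_\delta B'),
\]
which can be made smaller than $\eps$ by shrinking $\delta$ and $B'$, so (i) will follow automatically.

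Concretely, I would fix concentric balls $B\subset B'\subset N$ around the fixed point $p$ of $\phi$ inside a Darboux chart, and choose $\delta>0$ so small that $T^*_\delta B'$ is displaceable in $T^*N$ with $e(T^*_\delta B')<\eps/4$. Since $\phi_i\to\phi$ uniformly and $\phi(O_N)=O_N$, the Lagrangians $L_i=\phi_i(O_N)$ and $\overline L_i=\phi_i^{-1}(O_N)$ Hausdorff converge to $O_N$, hence for $i$ large $L_i,\overline L_i\subset T^*_{\eta_i}N$ with $\eta_i<\delta$. In particular $L_i\cap T^*B\subset T^*_\delta B'$, and similarly for $\overline L_i$.

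The core of the proof is then the construction of $\alpha_i\in\Ham_c(T^*N,\omega_0)$, supported in $T^*_\delta B'$, satisfying $\alpha_i(L_i)\cap T^*B=O_B$; the construction of $\beta_i$ from $\overline L_i$ is completely analogous, and then $\Psi_i=\alpha_i\times\beta_i$. Inside $L_i\cap T^*B'$ I would single out the connected component $C_i$ containing (or closest to) the point $\phi_i(p)\in L_i$, which converges to $p$ by the fixed-point assumption; by Hausdorff convergence, $C_i$ sits in $T^*_{\eta_i}B'$. I would then use a Weinstein neighborhood of $O_{B'}$ together with a small cut-off Hamiltonian supported in $T^*_\delta B'$ to straighten $C_i$ onto $O_B$ above $B$, and a second small-energy Hamiltonian, also supported in $T^*_\delta B'$, to push any remaining components of $L_i\cap T^*B'$ (also confined to $T^*_{\eta_i}B'$) away from $T^*B$. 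Properties (ii) and (iii) then hold by design: (iii) by construction, and (ii) because $\Psi_i$ is the identity outside a thin tube about $O_{N\times N}$ while $L_i\times\overline L_i\to O_{N\times N}$ Hausdorff.

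The hard part will be this straightening step, since mere Hausdorff convergence gives no $C^1$ control on $L_i$: a priori, $L_i$ may contain many tiny folds or bumps intruding into $T^*B'$, and neither $C_i$ nor the other components need be graphs over $B$. This is where the freedom to shrink $\delta$ below $\eta_i$ is crucial, together with the fixed-point assumption $\phi(p)=p$, which produces the distinguished component $C_i$ through $\phi_i(p)\to p$; all straightenings and push-offs can be arranged inside the thin, displaceable tube $T^*_\delta B'$, closing the energy-capacity bound above.
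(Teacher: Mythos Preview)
Your outline has a genuine gap at exactly the point you flag as ``the hard part'': the straightening step is not a construction, it is a hope. With only Hausdorff control on $L_i$, there is no reason to expect a Hamiltonian diffeomorphism supported in $T^*_\delta B'$ that maps $L_i$ to agree with $O_B$ over $B$. The piece $L_i\cap T^*B'$ may consist of many sheets with arbitrary fold patterns; you offer no mechanism for unfolding them, nor for pushing the remaining sheets off $T^*B$, inside a fixed thin tube. Your reasoning for (ii) is also faulty: knowing that $\Psi_i=\id$ outside $T^*_\delta(B'\times B')$ and that $L_i\times\overline L_i\to O_{N\times N}$ only gives $\Psi_i\Phi_i(O_{N\times N})\subset T^*_\delta(N\times N)$ with $\delta$ \emph{fixed}, not Hausdorff convergence to the zero section. (Your remark about ``shrinking $\delta$ below $\eta_i$'' goes the wrong way: you need $\delta\geq\eta_i$ for $L_i\cap T^*B'$ to lie inside the support of $\alpha_i$.)

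The paper avoids the straightening problem altogether by a commutator trick that exploits the specific product form $\Phi_i=\phi_i\times\phi_i^{-1}$. One first builds a compactly supported Hamiltonian diffeomorphism $f$ of $T^*(N\times N)$, supported in a small displaceable set $U_1\times U_1$ and preserving $O_{N\times N}$, which acts as the \emph{coordinate switch} $(x,y)\mapsto(y,x)$ on a smaller set $U_2\times U_2$ (this uses that $\dim N$ is even). One then sets $\Upsilon_i=\phi_i\times\id$ and
\[
\Psi_i=\Upsilon_i^{-1}\,f^{-1}\,\Upsilon_i\,f.
\]
A direct computation on an even smaller set $U_3\times U_3$ (chosen so that $\phi_i(U_3)\subset U_2$ for $i$ large) shows that $\Phi_i\Psi_i=\id$ there, because the switch allows $\phi_i$ acting in the first factor to cancel $\phi_i^{-1}$ in the second, and vice versa. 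This gives (iii) for free, with no flattening of $L_i$ required. Property~(ii) follows since the limits $\Phi,\Psi$ preserve $O_{N\times N}$, and property~(i) from $\gamma(\Psi_i)\le 2\gamma(f)$ together with the energy--capacity inequality applied to the support of $f$. The essential idea you are missing is this algebraic use of the switch map to turn $\phi_i\times\phi_i^{-1}$ into the identity locally.
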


We now explain why this lemma implies that $\gamma(L_i) \to 0$.  Fix  $\eps>0$ and let $B$ and $\Psi_i$ be as provided by Lemma \ref{lemma:trick_Lag}.  Using (\ref{eq:Phi-phi-Lag}), the triangle inequality and the fifth property in Proposition \ref{prop:Lag_spec}, we get 
\begin{align*}
  \gamma(L_i) &= \tfrac12\gamma(L_i \times \overline{L}_i) =  \tfrac12\gamma(\Phi_i (O_{N\times N}))\\
& \leq \tfrac12\gamma(\Phi_i\circ\Psi_i(O_{N\times N}))+\tfrac12\gamma(\Psi_i^{-1}(O_{N\times N}))\\
  & < \tfrac12\gamma(\Phi_i\circ\Psi_i(O_{N\times N}))+ \tfrac\eps2.
\end{align*}

The second and the third items of Lemma \ref{lemma:trick_Lag} allow us to apply Lemma \ref{lemma:variant_HLsword} and conclude that $\gamma(\Phi_i\circ\Psi_i(O_{N\times N})) \to 0$.  This implies that $\gamma(L_i) \to 0$. This concludes the proof of Theorem \ref{lem:zero_section} assuming Lemma \ref{lemma:trick_Lag}.
\end{proof}

\medskip

\begin{proof}[Proof of Lemma \ref{lemma:trick_Lag}.]
Fix $\eps>0$.  Pick a non-empty open ball $B_1$  in $N\simeq O_N$ containing a fixed point $p$ of $\phi$ and such that the displacement energy  of $U_1:=T^*_{1} B_1$ in $T^*N$ is less than $\frac\eps{4}$.  Note that the displacement energy of $U_1 \times U_1$ inside $ T^*(N \times N) $ is also less than $\frac \eps{4}$.

The following claim asserts the existence of a convenient Hamiltonian diffeomorphism which switches coordinates on a small open set.  

\begin{claim}\label{claim:switch-coord-Lag} There exist an open ball $B_2\subset B_1$ containing the fixed point $p$, $0<r_2 < 1$ and a Hamiltonian diffeomorphism $f$ of $T^*N \times T^*N$ such that:
  \begin{itemize}
  \item $f(O_{N \times N}) = O_{N\times N}$,
  \item $f$ is the time-1 map of a Hamiltonian supported in $U_1 \times U_1$,
  \item for all $(x,y)\in U_2 \times U_2$, we have $f(x,y)=(y,x)$, where $U_2 :=T^*_{r_2} B_2$.
  \end{itemize}
\end{claim}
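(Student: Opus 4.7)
The plan is to build $f$ as the time-$1$ map of a suitably cut-off cotangent lift of an isotopy on the base manifold $N\times N$. The parity assumption ``$\dim N$ even'' will enter precisely when constructing this base isotopy.

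I would begin by fixing a chart $\kappa_0:W_0\to\R^n$ of $N$ centered at $p$ with $W_0\subset B_1$, and form the product chart $\kappa=\kappa_0\times\kappa_0$ on $W_0\times W_0$. In this chart, the coordinate swap $N\times N\to N\times N$ becomes the linear map $\sigma:(x_1,x_2)\mapsto (x_2,x_1)$ in $\R^{2n}$, whose determinant is $(-1)^n=+1$ since $n=\dim N$ is even. Hence $\sigma$ lies in the connected group $\mathrm{GL}^+(2n,\R)$, and there exists a smooth path $\sigma_t$ from $\id$ to $\sigma$ inside it. Shrinking $W_0$ if necessary so that $\kappa(W_0\times W_0)$ contains a large Euclidean ball $B_{R+1}$, I would cut off the (linear) time-dependent vector field generating $\sigma_t$ by a bump function equal to $1$ on $B_R$ and vanishing outside $B_{R+1}$. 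Pulling back via $\kappa$ and extending by the identity, this produces an isotopy $\psi_t$ of $N\times N$ supported in $W_0\times W_0\subset B_1\times B_1$, whose time-$1$ map coincides with the coordinate swap on the smaller set $V_2:=\kappa^{-1}(B_{R/M})\ni(p,p)$, where $M=\sup_t\|\sigma_t\|$.

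Next I would take the cotangent lift. Let $Y_t$ generate $\psi_t$ and set $H_t(q,p):=p(Y_t(q))$, a Hamiltonian on $T^*(N\times N)=T^*N\times T^*N$ which is linear in the fiber coordinates. Its Hamiltonian flow is the cotangent lift $T^*\psi_t$, a symplectic isotopy preserving $O_{N\times N}$ with $T^*\psi_1=\mathrm{swap}$ on $T^*V_2$. To make everything compactly supported, I would cut off in the fibers: choose $\chi:\R\to[0,1]$ equal to $1$ near $0$ and supported in $(-1,1)$, and define $\tilde H_t(q,p):=\chi(\|p_1\|)\chi(\|p_2\|)\,H_t(q,p)$ with $p=(p_1,p_2)$. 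Then $\tilde H_t$ is supported in $U_1\times U_1$, and since $H_t$ is linear in $p$ it still vanishes on $\{p=0\}$; in particular $X_{\tilde H_t}$ is tangent to $O_{N\times N}$ along the zero section, so the time-$1$ map $f$ of $\tilde H_t$ preserves $O_{N\times N}$.

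To finish, pick $0<r_2<1/M'$ where $M':=\sup_t\|(d\psi_t^{-1})^T\|$, and choose an open ball $B_2\ni p$ with $B_2\times B_2\subset V_2$. Then the $T^*\psi_t$-orbit of any point in $U_2\times U_2$ stays inside $\{\|p_1\|,\|p_2\|<1\}$, where $\tilde H_t=H_t$, so on $U_2\times U_2$ the flow of $\tilde H_t$ agrees with $T^*\psi_t$ and $f=T^*\psi_1=\mathrm{swap}$ there, as required. The main subtlety is verifying that the fiber cutoff preserves the zero section; this works here thanks to the linearity of $H_t$ in $p$, which forces $\tilde H_t$ to vanish identically on $O_{N\times N}$ regardless of $\chi$. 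Without this linearity the cutoff would have to be designed with more care.
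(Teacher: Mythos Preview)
Your argument is correct and follows essentially the same route as the paper: construct a compactly supported base isotopy of $N\times N$ realizing the swap near $(p,p)$ (using that $\dim N$ is even so the swap is orientation-preserving), take its cotangent lift, and then cut off the generating Hamiltonian in the fibers, noting that the Hamiltonian vanishes on the zero section so the cutoff flow still preserves $O_{N\times N}$. One small slip: you need the $T^*\psi_t$-orbit to stay in the region where $\chi(\|p_1\|)=\chi(\|p_2\|)=1$, not merely in $\{\|p_1\|,\|p_2\|<1\}$; this is fixed by taking $r_2$ correspondingly smaller (or choosing $\chi\equiv 1$ on $[0,1-\delta]$), exactly as the paper does with its cutoff $\beta$.
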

\begin{proof} Since $N$ is assumed even dimensional, there is an identity isotopy, say $\varphi_t$, of $N \times N$ which is supported in $B_1 \times B_1$ with the following property:  there exists a ball $B_2 \subset B_1$ containing $p$ such that $\varphi_1(q_1, q_2) = (q_2, q_1)$  on $B_2 \times B_2$. 

Let $\tilde \varphi_t$ denote the canonical  lift of this isotopy to $T^*N\times T^*N$.   The isotopy $\tilde \varphi_t$ is symplectic, it preserves $O_{N\times N}$, it is supported in $T^*B_1 \times T^*B_1$, and it can be checked that $\tilde \varphi_1(x, y) = (y, x)$  on $T^*B_2 \times T^*B_2$. Furthermore, the isotopy is Hamiltonian.  Let $H$ denote a generating Hamiltonian of the isotopy which is supported in $T^*B_1 \times T^*B_1$.

To construct our desired Hamiltonian diffeomorphism $f$, we simply replace $H$ by $\beta H$ where $\beta$ is a smooth cut-off function on $T^*(N \times N)$ such that $\beta = 1$ on $T^*_{1-\delta} (N \times N)$, where $\delta$ is a small positive number, and $\beta = 0$  outside $T^*_{1} (N \times N)$.  We set $f$ to be the time-$1$ map of the Hamiltonian flow of $\beta H$ and leave it to the reader to check that it satisfies the requirements of the claim.
\end{proof}

We can now complete the proof of Lemma \ref{lemma:trick_Lag}. Since $p\in B_2$, there exists a ball $B_3 \subset B_2 $ and $0 < r_3 < r_2$ such that $ \phi (U_3 )\Subset U_2$ (i.e., $ \phi(U_3) $ is compactly contained in $ U_2 $), where $U_3 : = T^*_{r_3} B_3$.

Let $\Upsilon_i=\phi_i \times \id_{T^*N}$ and let \[\Psi_i=\Upsilon^{-1}_i\circ f^{-1}\circ \Upsilon_i\circ f.\] 
We will first show that $\gamma(\Psi_i(O_{N \times N})) < \eps$.  Note that by Equation \eqref{eq:MVZ_inequality}, we have $\gamma(\Psi_i(O_{N \times N})) \leq \gamma (\Psi_i)$, where $\gamma(\Psi_i)$ is the Hamiltonian $\gamma$ which was introduced above in Equation \eqref{eq:Ham_gamma}.  Hence, it is sufficient to show that $\gamma(\Psi_i) < \eps$.
The triangle inequality for $\gamma$ (Equation \eqref{eq:triangle}) and its conjugacy invariance (Equation \eqref{eq:conj_invariance_gamma}) yield $\gamma(\Psi_i)\leq 2\gamma(f)$.  Lastly, $\gamma(f) < \frac \eps{2}$ because the displacement energy of its support is smaller than $\frac{\eps}{4}$; see Equation \eqref{eq:energy_capacity}. This implies Property (i) in Lemma \ref{lemma:trick_Lag}.

Next, we will verify the second property in Lemma \ref{lemma:trick_Lag}.  Define $\Psi :=\Upsilon^{-1} \circ f^{-1}\circ \Upsilon \circ f$, where $\Upsilon := \phi \times \id_{T^*N}$, and let $\Phi := \phi \times \phi^{-1}$.   Since  $f, \Upsilon$ and $\Phi$ preserve $O_{N  \times N}$, we conclude that $\Phi \circ\Psi$ also preserves $O_{N \times N}$.    Now, there exists  a neighborhood of $O_{N\times N}$ where the sequences $\Psi_i$ and $\Phi_i$  converge uniformly to  $\Psi $ and $\Phi$, respectively.  It follows that $\Phi_i \circ\Psi_i(O_{N \times N})$ converges in Hausdorff topology to $O_{N \times N}$.

It remains to verify the third property from the lemma.  We leave it to the reader to check that $\Phi_i\circ \Psi_i (x,y) =  (x, y)$ for all $(x,y) \in U_3  \times U_3$, when $ i $ is large enough.  This relies crucially on the following observations:  $f(x,y) = (y,x)$ on $U_2 \times U_2$ and $\Upsilon_i(U_3 \times U_3) \subset U_2 \times U_2$ for $i$ large enough.  The last statement is a consequence of the fact that $\phi(U_3) \Subset U_2$.
 
 Let $B=B_3 \times B_3$  and $r=r_3$, so that $T^*_r B = U_3 \times U_3$.  As we have seen,  for $i$ large, $\Phi_i \circ\Psi_i$ coincides with the identity on  $T^*_r B$.   We claim that this implies the third property.   Indeed, it clearly implies $O_B \subset \Phi_i \circ\Psi_i(O_{N \times N}) \cap T^*B$.  Furthermore, it also implies that if $\Phi_i \circ\Psi_i(O_{N \times N}) \cap T^*B$ contains a point which is not in $O_B$, then such a point is in $T^*B \setminus T_r^*B$.  But of course this cannot happen for $i$ large because of the Hausdorff convergence of  $\Phi_i \circ\Psi_i(O_{N \times N})$ to $O_{N \times N}$.    This establishes the third property in Lemma \ref{lemma:trick_Lag}.
\end{proof}

\subsection{Proof of Theorem \ref{theo:Arnold_Lagrangians}}\label{sec:proof_Lagrangians}

By the assumptions of the theorem, one can find some $ r > 0 $ and a sequence $ \phi_i \in \Ham_c(T^*_rN, \omega_0) $ such that $ \phi_i $ converges uniformly to $ \phi $. Since the number of Lagrangian spectral invariants of $ \phi $ is assumed to be less than $ \cl(N) $, there exist some $ \alpha,\beta \in H_*(N) $  with $ \deg \alpha, \deg \beta < \dim N $ and $ \alpha \cap \beta \neq 0 $, such that $ \ell(\alpha,\phi) = \ell(\alpha \cap \beta,\phi) =: \lambda $. By the continuity of spectral invariants ({\it i.e.}\ the first item of Theorem \ref{prop:lag-spec-extension}), we have $ \lim \ell(\alpha,\phi_i) = \lim \ell(\alpha \cap \beta,\phi_i) = \lambda $, when $ i \rightarrow \infty $.

Let $ U \subset O_N $ be any neighbourhood of $ L \cap O_N $ in $ O_N $. It is enough to show that the closure $ \overline{U} $ is homologically non-trivial in $ O_N $. For doing this, 
pick a smooth function $f: N \rightarrow \R$ such that $f = 0$ on $\overline{U}$ and $f < 0 $ on $N \setminus \overline{U}$. Denote by $ \pi : T^*N \rightarrow N $ the natural projection and define $F = \beta \pi^*f$ where $\beta : T^*N \rightarrow \R$ is  compactly supported and $\beta=1$ on $T_R^*N$ where $R$  is taken to be large in comparison to $r$.  

\begin{claim}\label{cl:claim3} 
There exists an integer $i_0$ such that for any $i\geq i_0$, and for sufficiently small values of $\eps > 0$, $$\ell(\alpha \cap \beta, \phi^{\eps}_F \, \phi_i ) = \ell(\alpha \cap \beta, \phi_i).$$ 
\end{claim}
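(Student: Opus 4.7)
The plan is to show that, for $i \geq i_0$ and $\eps > 0$ sufficiently small, the two action spectra coincide as subsets of $\R$:
\[ \Spec(\phi^\eps_F\phi_i(O_N);\phi^\eps_F\phi_i) = \Spec(L_i;\phi_i), \]
and then to conclude by a topological argument. Once this equality of spectra is established, the Hofer continuity from item 2 of Proposition \ref{prop:Lag_spec} (applied to generating Hamiltonians $H_i$ and $H_i\#\eps F$ of $\phi_i$ and $\phi^\eps_F\phi_i$) shows that the map $\eps \mapsto \ell(\alpha \cap \beta, \phi^\eps_F\phi_i)$ is continuous in $\eps$ and takes values in the fixed closed nowhere dense subset $\Spec(L_i;\phi_i) \subset \R$. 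Since a closed subset of $\R$ with empty interior is totally disconnected, this continuous function must be constant on $[0,\eps_0]$; its value at $\eps = 0$ is $\ell(\alpha \cap \beta, \phi_i)$, yielding the claim.

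To prove the coincidence of spectra, I would first note that since $f \le 0$ attains its maximum value $0$ on the closed set $\overline U$, smoothness forces $df$ to vanish on all of $\overline U$. Combining the Hausdorff convergence $L_i \to L$ with the compactness of $L$ and the hypothesis $L \cap O_N \subset U$, I would then establish, for all $i$ large enough, two geometric facts: (i) $L_i \cap O_N \subset U$ (any accumulation point of such points must lie in $L \cap O_N \subset U$), and (ii) there exists $\delta_0 > 0$ such that every $(q,p) \in L_i$ with $q \notin U$ satisfies $|p| \ge \delta_0$ (the compact set $L \setminus \pi^{-1}(U)$ has positive distance from $O_N$, which transfers to $L_i$ for $i$ large). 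Property (i) together with $df|_{\overline U} = 0$ shows that $\phi^\eps_F$ pointwise fixes $L_i \cap O_N$, giving the inclusion $L_i \cap O_N \subset \phi^\eps_F(L_i) \cap O_N$. Conversely, any additional intersection point would arise from some $(q, -\eps df(q)) \in L_i$ with $q \notin \overline U$, which property (ii) rules out once $\eps \|df\|_{C^0} < \delta_0$. A direct computation with the composition formula for Hamiltonian actions then shows that the action of $\phi^\eps_F\phi_i$ at each shared intersection point $(q,0)$ differs from that of $\phi_i$ only by the integral of $\eps F$ along the constant $\phi^t_{\eps F}$-trajectory at $(q,0)$, which equals $\eps f(q) = 0$ since $q \in \overline U$. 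Hence the two spectra coincide as subsets of $\R$.

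The main obstacle is the geometric step above: extracting the uniform lower bound (ii) from the Hausdorff convergence $L_i \to L$ and the compactness of $L$. This bound is precisely what rules out spurious new intersection points of $\phi^\eps_F(L_i)$ with $O_N$ appearing outside of $U$ for small $\eps$, and without it the spectra would only match up to small perturbations, which would be too weak to run the totally disconnected argument.
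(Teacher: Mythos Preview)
Your argument is correct and follows the same route as the paper: both show that $\phi^\eps_F(L_i)=L_i+L_{\eps f}$ and $L_i$ have the same intersection points with $O_N$ and the same action values there, so the spectra coincide, and then conclude by continuity of $\eps\mapsto\ell(\alpha\cap\beta,\phi^\eps_F\phi_i)$ together with nowhere-density of the spectrum. One small wrinkle in your justification of~(ii): the positive distance you actually need is that between the disjoint compacta $L$ and $O_{N\setminus U}$, since the distance from $L\setminus\pi^{-1}(U)$ to $O_N$ does not by itself transfer to $L_i$ (a point of $L_i$ lying over $N\setminus U$ near $\partial U$ could be Hausdorff-close to a point of $L$ lying over $U$, where no lower bound on the fibre coordinate is available); with this adjustment your step~(ii) goes through exactly as you intend.
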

\begin{proof} 
 Let $L_i = \phi_i(O_N)$ and $L_{\eps f} = \phi^{\eps}_F(O_N)$.  Note that  $\phi^t_F(q,p) = (q, p+ t \, df(q))$
for $t\in [0,1]$ and $(q, p) \in T^*_rN$.  Therefore, we have  $L_{\eps f} = \{(q, \eps df(q)) : q \in N\}$ and  $\phi^{\eps}_F \phi_i(O_N) = \phi^{\eps}_F(L_{i})= L_i + L_{\eps f}$ where $L_i + L_{\eps f} := \{(q, p + \eps df(q)): (q, p) \in L_i\}$. 
 
 Since $ L \cap \pi^{-1}(O_N \setminus U) $ is compact and does not intersect $ O_N $, and since the sequence $ \phi_i $ converges uniformly to $ \phi $, we conclude that for small enough $ \eps $ and large enough $ i $, $ (L_i +  L_{\eps f}) \cap \pi^{-1}(O_N \setminus U) $ does not intersect $ O_N $ as well. On the other hand, since $ f = 0 $ on $ U $, we get that $ (L_i +  L_{\eps f}) \cap \pi^{-1}(U) = L_i \cap \pi^{-1}(U) $. Therefore, for small enough $ \eps > 0 $ and large enough $ i $, the Lagrangians $ L_i $ and $ L_i + L_{\eps f} $ have the same intersection points with the zero section $ O_N $. Moreover, it is easy to see that for each such intersection point, the two action values corresponding to $ \phi_i $ and $ \phi^{\eps}_F \phi_i$ coincide. Therefore, by fixing $ i $ and $ \eps > 0 $, and considering the family of Lagrangians $ L_i + L_{s\eps f} $ when $ s \in [0,1] $, we see that the action spectra  $ \Spec(L_i + L_{s\epsilon f}, \phi^{s\eps}_F \phi_i) $ do not depend on $ s $. Also, recall that the action spectrum  has an empty interior in $ \mathbb{R} $. As a result, since the value $ \ell(\alpha \cap \beta, \phi^\eps_F \phi_i) $ depends continuously on $ s $, we conclude that it in fact does not depend on $ s \in [0,1] $. In particular, $ \ell(\alpha \cap \beta, \phi_i ) = \ell(\alpha \cap \beta, \phi^\eps_F \phi_i) $.
\end{proof}

The triangle inequality of Proposition \ref{prop:Lag_spec}  implies that, for all $i$,  $\ell(\alpha \cap \beta,  \phi^\eps_F \phi_i)-  \ell(\alpha , \phi_i) \leq \ell(\beta,  \phi^\eps_F )$.  Using the above claim, for $i$ large and $\eps$ small enough, we have $\ell(\alpha \cap \beta,\phi_i )-  \ell(\alpha , \phi_i) \leq \ell(\beta, \phi^\eps_F )$. Taking limit as $i \to \infty$, and recalling that  $ \lim \ell(\alpha,\phi_i) = \lim \ell(\alpha \cap \beta,\phi_i) = \lambda $, we  obtain $0\leq \ell(\beta,  \phi^\eps_F)$. 

We can now conclude our proof as follows. On the one hand, by Proposition \ref{prop:Lag_spec}.7,  we have $\ell(\beta,\phi_{\eps F})=c_{LS}(\beta, \eps f)=c_{LS}([M]\cap\beta, \eps f)$. On the other hand, Propostion \ref{prop:cLS}.2 gives $c_{LS}([M],\eps f)=0$. Thus, using Proposition \ref{prop:cLS}.3, we obtain  the equality $c_{LS}([M]\cap\beta, \eps f)=c_{LS}([M],\eps f)$. By Proposition \ref{prop:cLS}.4 it follows that the zero level set of $f$, that is $\overline{U}$, is homologically non-trivial.

\subsection{Proof of Proposition \ref{prop:single-intersection}}\label{sec:single-intersection}

 According to \cite{Pageault}, there exists a $C^1$ function $f:M\to\R$, whose set of critical points is an arc $\gamma$, i.e., is homeomorphic to $[0,1]$. Let us fix such a function $f$. Let $F=f\circ\pi$ where $\pi$ denotes the canonical projection $\pi:T^*M\to M$. The intersection between the $C^0$-Lagrangian submanifold $\mathrm{graph}(df)=\phi_F^1(O_M)$ and the zero-section is exactly $\gamma$ (where we canonically identify $O_M$ with $M$).

  We will construct the $C^0$-Lagrangian $L$ roughly by ``contracting the arc to a point''. More precisely, given a point $p\in \gamma$, we will construct a map $h:T^*M\to T^*M$ which is a symplectic diffeomorphism between $T^*M\setminus\gamma$ and $T^*M\setminus\{p\}$, and satisfies $h(\gamma)=p$ and $h(O_M)=O_M$. We will then prove that the map
  \[\psi:
    \begin{cases}
      x\mapsto h \phi_F^1 h^{-1}(x),& \text{for }x\neq p\\
      p\mapsto p. &  
    \end{cases}
  \]
  is a Hamiltonian homeomorphism and that $L=\psi(O_M)$ has a unique intersection point with $O_M$.
  
  Let us now start the construction. A version of the Jordan-Schoenflies theorem (for instance its extension due to Homma \cite{Homma}) implies that the arc $\gamma$ admits a basis of neighborhoods $(V_i)_{i\geq 0}$, which are all homeomorphic to open discs and satisfy $\overline{V_{i+1}}\subset V_i$ for all $i$. Let $(U_i)_{i\geq 1}$ be a decreasing basis of neighborhoods of $p$.
 Finally, let  $(\delta_i)_{i\geq 0}$ be a decreasing sequence  of real numbers converging  to 0.

 Let $W_0=V_{0}$ and $\eps_0=\delta_0$.
 Since the $V_i$'s form a basis of disc-like neighborhoods, there exists 
 a smooth (time-dependent) vector field $X_1$ supported in $W_0$ whose time-one map $\zeta_1$ sends 
 $V_1$ into $U_1$. We may also assume that $\zeta_1$ fixes $p$. We denote 
 $W_1=\zeta_1(V_{1}) \subset U_1$.

The Hamiltonian function $(q,p)\mapsto\langle p,X_1(q)\rangle$ vanishes on $O_M$ and its flow is supported in $T^*W_0$. By multiplying it with an appropriate cutoff function which equals 1 on a neighborhood of the support of $X_1$ in $T^*M$, we obtain a Hamiltonian $H_1$ supported in $T^*_{\eps_0}W_0$. This Hamiltonian $H_1$ vanishes on $O_M$, thus its flow preserves it. Moreover, by construction, the restriction of its flow to the zero section coincides with the flow of $X_1$. We denote by $h_1=\phi_{H_1}^1$ its time-one map. 

Repeating the above, we construct by induction 
a sequence of positive real numbers $\eps_k$ converging to $ 0 $,
a decreasing sequence of open subsets $(W_k)$ of $M$  and a sequence of Hamiltonians $(H_k)$ on $T^*M$ such that for each $k\geq 1$, the three following properties hold:
    \begin{enumerate}[(i)]
    \item $H_k$ is supported in $T_{\eps_{k-1}}^*W_{k-1}$,
    \item the time-one map $h_k=\phi_{H_k}^1$ preserves $O_M$,
    \item 
      $W_{k}=h_{k}\circ \dots \circ h_1(V_{k})$ is included in 
      $U_k$,
    \item $T_{\eps_{k}}^*W_{k}$ is included in 
      $h_{k}\circ \dots \circ h_1(T_{\delta_k}^*V_{k})$. 
    \end{enumerate}
    Indeed, assuming all the sequences built up to the order $k$, we let $X_{k+1}$ be a vector field on $M$ which maps  the disc  $h_{k}\circ \dots \circ h_1(V_{k+1})$ into 
    $U_{k+1}$. 
    The Hamiltonian $H_{k+1}$ is then obtained by cutting off  $(q,p)\mapsto \langle p,X_{k+1}(q)\rangle$ appropriately, as above.

    For any $x\in \gamma$, we have $h_k\circ \dots\circ h_1(x)\subset 
    U_{k}$
    thus the sequence $(h_k\circ \dots\circ h_1(x))$ converges to $p$. For any $x\notin \gamma$ we have $x\notin T_{\delta_k}^*
    V_k$ for $k$ large enough. It follows that for $k$ large enough, $h_k\circ \dots\circ h_1(x)$ does not belong to $T_{\eps_{k}}^*W_{k}$, hence does not belong to the support of any $h_i$ for $i>k$. Thus, the  sequence $(h_k\circ \dots\circ h_1(x))$ stabilizes to a point different from $p$.

    We set $h(x)=\lim_{k\to \infty}f_k(x)$, where $f_k(x):=h_k\circ \dots\circ h_1(x)$. This limit is uniform. Indeed, given $\eps>0$, there exists an integer $N$ such that $\mathrm{diam}(T_{\eps_k}^*
    U_k)<\eps$  for all $k\geq N$. Let $k\geq N$. Then for any $x\in f_k^{-1}(T_{\eps_k}^*W_k)$, we have $f_k(x)\in T_{\eps_k}^*W_k$, hence $f_{k+p}(x)\in T_{\eps_k}^*W_k\subset T_{\eps_k}^*
    U_k$ for any $p\geq 1$. Taking limit as $p$ goes to infinity, we obtain $f_k(x), h(x)\in T_{\eps_k}^*
    U_k$ hence $d(f_k(x), h(x))<\eps$. Now for $x\notin f_k^{-1}(T_{\eps_k}^*W_k)$ we have $f_k(x)\notin T_{\eps_k}^*W_k$, hence $f_{k+p}(x)=f_k(x)$ for all $p\geq 1$. We deduce that $h(x)=f_k(x)$. We have shown that for all $x$, $d(f_k(x),h(x))<\eps$, which proves that the limit is uniform.  

As a consequence, $h$ is continuous. Moreover the restriction of $h$ induces a  symplectic diffeomorphism $T^*M\setminus\gamma\to T^*M\setminus\{p\}$. Also note that $h$ preserves the zero section $O_M$. As announced in the beginning of the proof we now define
\[\psi:
    \begin{cases}
      x\mapsto h \phi_F^1 h^{-1}(x),& \text{for }x\neq p,\\
      p\mapsto p. &  
    \end{cases}
  \]
  Since $\phi_F^1(O_M)\cap O_M=\gamma$, and since $h(O_M)=O_M$, we have $\psi(O_M)\cap O_M=\{p\}$. Finally, $\psi$ is a Hamiltonian homeomorphism because it is the  $C^0$-limit of the Hamiltonian diffeomorphisms
  \[(h_k\circ \dots\circ h_1)\circ\phi_F^1\circ (h_k\circ \dots\circ h_1)^{-1}\]
as $k$ goes to infinity.  \hfill $\square$

\section{Hausdorff limits of Legendrians and proof of Theorem \ref{theo:Arnold_Legendrians}}\label{sec:legendrians}
This section is dedicated to the proof of Theorem \ref{theo:Arnold_Legendrians}.  Recall that we consider a sequence $L_i$ of Legendrian submanifolds, contact isotopic to the zero section in $J^1N = T^*N \times \R$, which has a Hausdorff limit $L$.  Denote by $ \pi_\mathbb{R} : J^1N = T^* N \times \mathbb{R} \rightarrow \mathbb{R} $ the natural projection.

We have not been able to verify whether it is possible to define Legendrian spectral invariants for the Hausdorff limit $L$.  However, as we will now explain, it is still possible to view Theorem \ref{theo:Arnold_Legendrians} as an incarnation of Principle \ref{principle}: Let $K$ be a (smooth) Legendrian submanifold of $J^1N$ which is contact isotopic to the zero section.  Then, as was explained in Section \ref{sec:spec_inv_leg}, the set $\spec(K)=\pi_{\R}(K \cap (O_N \times \R))$ is the set of critical values of the gfqi associated to $K$.  Hence, if the cardinality of $\spec(K)$ is smaller than $\cl(N)$, then so is the total number of spectral invariants of $K$.  Therefore, despite the fact that we cannot define spectral invariants for the Hausdorff limit $L$, we can interpret \emph{the cardinality of the set $\spec(L)=\pi_\R (L \cap (O_N \times \R)) $ being smaller than $\cl(N)$} to mean that \emph{$L$ has fewer spectral invariants than $\cl(N)$.}

\begin{proof}[Proof of Theorem \ref{theo:Arnold_Legendrians}] Observe that the Hausdorff convergence of $L_i$'s to $L$ implies that the set $ L_i \cap (O_N \times \R) $ is contained in an arbitrarily small neighbourhood of $ L \cap (O_N \times \R) $ for large $ i $. Because $\ell(a, L_i)$ corresponds to an intersection point of $L_i$ with the zero wall, we conclude that the set of limit points of $\{\ell(a, L_i): a \in H_*(N) \setminus \{0\}, i\in \N \}$ is contained in $ \spec(L)$.

Assume that $ \spec(L)$ has less than $ \cl(N) $ points. It follows from the above discussion that there exist $\alpha, \beta \in H_*(N) \setminus \{0\}$ and $ \lambda \in  \spec(L)$ such that for a subsequence $ (i_k) $ of indices we have $ \ell(\alpha, L_{i_k}) \rightarrow \lambda$ and $ \ell(\alpha \cap \beta, L_{i_k}) \rightarrow \lambda $ as $k \to \infty$. By passing to this subsequence, we may further assume that $ \ell(\alpha, L_{i}) \rightarrow \lambda$ and $ \ell(\alpha \cap \beta, L_{i}) \rightarrow \lambda $ as $i \to \infty$. Let us show that $ L \cap (O_N \times \{\lambda\}) $ is homologically non-trivial in $O_N \times \{\lambda\}$.

Pick any neighbourhood $ V $ of $ L \cap (O_N \times \{ \lambda \}) $ in $ J^1 N $.
Denote $ U := \pi_N (V) $, where $ \pi_N : J^1 N \rightarrow N $ is the natural projection, and pick a smooth function $f: N \rightarrow \R$ such that $f = 0$ on $ \overline{U}$ and $f < 0 $ on $N \setminus  \overline{U}$.

\begin{claim}\label{cl:claim2}
There exists an integer $i_0$ such that for any $i\geq i_0$, and for sufficiently small values of $\eps > 0$, $$\ell(\alpha \cap \beta, L_i +  L_{\eps f}) = \ell(\alpha \cap \beta, L_i).$$ 
\end{claim}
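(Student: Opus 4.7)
The argument mirrors that of Claim~\ref{cl:claim3} in the Lagrangian setting. Since by hypothesis $\spec(L)$ is finite, fix $\delta > 0$ so small that $[\lambda - 2\delta, \lambda + 2\delta] \cap \spec(L) = \{\lambda\}$, and set $I := (\lambda - \delta, \lambda + \delta)$. A point $(q, 0, z) \in L_i + L_{s\epsilon f}$ corresponds bijectively to a point $(q, -s\epsilon df(q), z - s\epsilon f(q)) \in L_i$. The plan is to show that for all $i$ large and $\epsilon$ small, every such intersection with $z \in I$ forces $q \in U$. Since $f$ and $df$ vanish on $\overline{U}$, this identifies such intersection points of $L_i + L_{s\epsilon f}$ with intersection points of $L_i$ itself, preserving $z$-values, so that $S := \spec(L_i + L_{s\epsilon f}) \cap I$ is a fixed finite subset of $I$, independent of $s \in [0,1]$.

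The geometric projection is the main obstacle; it follows from Hausdorff convergence $L_i \to L$ combined with a compactness argument. The key lemma is: for any open neighborhood $W \subset V$ of $L \cap (O_N \times \{\lambda\})$ in $J^1N$, there exists $\rho > 0$ such that
\[
L \cap \bigl\{(q, p, z) : \|p\| \leq \rho,\ |z - \lambda| \leq \rho\bigr\} \subset W;
\]
otherwise, by compactness of $L$, one would extract a limit point in $L \cap (O_N \times [\lambda - \delta, \lambda + \delta]) = L \cap (O_N \times \{\lambda\}) \subset V$ lying outside $W$, a contradiction. Choose $W$ so that $\pi_N(W) \Subset U$ and then $\rho$ accordingly. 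For $\epsilon$ small enough the point $(q, -s\epsilon df(q), z - s\epsilon f(q))$ lies in $\bigl\{\|p\| \leq \rho/2,\ |z - \lambda| \leq \rho/2\bigr\}$; and by Hausdorff convergence, for $i$ large it lies within an arbitrarily small distance $r$ of some point of $L \cap \bigl\{\|p\| \leq \rho,\ |z - \lambda| \leq \rho\bigr\} \subset W$. In particular $q$ lies within $r$ of $\pi_N(W) \Subset U$, forcing $q \in U$.

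With $S \subset I$ fixed, finite, and hence a closed nowhere dense subset of $\mathbb{R}$, the conclusion proceeds exactly as in the Lagrangian proof. By Proposition~\ref{prop:Leg_spec}(2), the map $s \mapsto \ell(\alpha \cap \beta, L_i + L_{s\epsilon f})$ is continuous, and at $s = 0$ equals $\ell(\alpha \cap \beta, L_i)$, which for $i$ large lies within $\delta/2$ of $\lambda$, well inside $I$. If this map were to exit $I$ at some first $s_0$, its restriction to $[0, s_0)$ would take values in $S$; since the image would then be a connected subset of $\mathbb{R}$ contained in the closed set $S$ with empty interior, it would have to be a singleton, i.e., the map would be constant on $[0, s_0)$, equal to $\ell(\alpha \cap \beta, L_i)$. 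Continuity at $s_0$ would then force $\ell(\alpha \cap \beta, L_i) \in \partial I = \{\lambda \pm \delta\}$, contradicting its closeness to $\lambda$. Hence the continuous map takes values in $S$ throughout $[0,1]$, and the same connectedness argument shows it is constant; evaluating at $s = 1$ yields the claim.
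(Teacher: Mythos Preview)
Your proof is correct and follows essentially the same route as the paper's: both show that intersections of $L_i + L_{s\epsilon f}$ with the zero wall at heights near $\lambda$ must project into $U$ (your compactness lemma makes explicit what the paper asserts in one line), whence the relevant slices of the spectra agree for all $s$, after which continuity and spectrality finish the job. One minor slip: the set $S$ need not be \emph{finite} (the critical-value set of the gfqi is only closed of measure zero by Sard), but since your subsequent argument only uses that $S$ is closed and nowhere dense, the conclusion stands unaffected.
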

\begin{proof} By the Hausdorff convergence of $ L_i $ to $ L $, there exists some $ \delta > 0 $ such that for $i$ large enough and $\eps \geqslant 0 $ small enough, we have $$ (L_i+L_{\eps f}) \cap (O_N \times (\lambda - \delta,\lambda+\delta)) \subset V .$$ Furthermore, for any $(q,p,z)\in V$, we have that $q\in U$ and thus $f(q)=0$ and $df(q)=0$. This implies that $(L_i+L_{\eps f})\cap (O_N\times (\lambda - \delta,\lambda+\delta))=L_i\cap (O_N\times(\lambda - \delta,\lambda+\delta))$, in particular $ \spec(L_i + L_{\eps f}) \cap (\lambda - \delta,\lambda+\delta) = 
\spec(L_i) \cap (\lambda - \delta,\lambda+\delta) $.

The continuity and spectrality properties of spectral invariants, together with the fact that the spectrum of $L_i$ has an empty interior in $ \mathbb{R} $ and that $ \ell(\alpha\cap\beta,L_i) \in (\lambda-\delta,\lambda+\delta) $ for $ i $ large enough, imply that the spectral invariant $\ell(\alpha\cap\beta,L_i+L_{\eps f})$ is independent of $\eps$.
\end{proof}

Now the triangle inequality of Proposition \ref{prop:Leg_spec} implies that, for all $i$,  $\ell(\alpha \cap \beta, L_i +  L_{\eps f})-  \ell(\alpha , L_i) \leq \ell(\beta, L_{\eps f})$. Using the above claim, for $i$ large and $\eps$ small enough, we have $\ell(\alpha \cap \beta, L_i)-  \ell(\alpha , L_i) \leq \ell(\beta, L_{\eps f})$.
Taking limit as $i\to\infty$, and recalling that $\ell(\alpha \cap \beta, L_i),  \ell(\alpha,  L_i) \to \lambda$, we  obtain $0\leq \ell(\beta, L_{\eps f})$. 

We can now conclude our proof as follows. On the one hand, by Proposition \ref{prop:Leg_spec}.4,  we have $\ell(\beta,L_{\eps f})=c_{LS}(\beta, \eps f)$.  Note that $ c_{LS}(\beta, \eps f) =c_{LS}([N]\cap\beta, \eps f)$ and by the above paragraph this number is non-negative. On the other hand, Propostion \ref{prop:cLS}.2 gives $c_{LS}([N],\eps f)=0$. Thus, using Proposition \ref{prop:cLS}.3, we obtain  the equality $c_{LS}([N]\cap\beta, \eps f)=c_{LS}([N],\eps f)$. By Proposition \ref{prop:cLS}.4 it follows that the zero level set of $f$, that is the closure of $U= \pi_N(V)$, is homologically non-trivial in $ N $. Since our choice of a neighbourhood $ V $ of $ L \cap (O_N \times \{\lambda\}) $ was arbitrary, we conclude that $ L \cap (O_N \times \{\lambda\}) $ is homologically non-trivial in $O_N\times\{\lambda\}$.
\end{proof}

%
%

\bibliographystyle{abbrv}
\bibliography{c0LSLag}


{\small

\medskip
\noindent Lev Buhovski\\
School of Mathematical Sciences, Tel Aviv University \\
{\it e-mail}: levbuh@tauex.tau.ac.il
\medskip

\medskip
\noindent Vincent Humili\`ere \\
\noindent CMLS, Ecole Polytechnique, Institut Polytechnique de Paris, 91128 Palaiseau Cedex, France\\
{\it e-mail:} vincent.humiliere@polytechnique.edu
\medskip

\medskip
 \noindent Sobhan Seyfaddini\\
\noindent Sorbonne Universit\'e, Universit\'e de Paris, CNRS, Institut de Math\'ematiques de Jussieu-Paris Rive Gauche, F-75005 Paris, France.\\
{\it e-mail:} sobhan.seyfaddini@imj-prg.fr}

\end{document}